\newtheorem*{main*}{Main Theorem}
\newtheorem{theorem}{Theorem}
\newtheorem{thm}{Theorem}
\newtheorem*{theorem*}{Theorem}
\newtheorem{prop}{Proposition}[section]
\newtheorem{lem}{Lemma}[section]
\newtheorem{cor}{Corollary}
\newtheorem*{corollary*}{Corollary}
\newtheorem*{cor*}{Corollary}
\newtheorem*{question*}{Question}
\newtheorem{conjecture}{Conjecture}
\newtheorem*{conjecture*}{Conjecture}
\theoremstyle{definition}
\newtheorem{prob}{Problem}
\newtheorem{defn}{Definition}
\newtheorem*{definition*}{Definition}
\theoremstyle{remark}
\newtheorem{remark}{Remark}[section]
\numberwithin{equation}{section}
\newcommand{\R}{\mathbb{R}}
\newcommand{\Z}{\mathbb{Z}}
\newcommand{\N}{\mathbb{N}}
\newcommand{\mc}{\mathcal}
\newcommand{\Ga}{\Gamma}
\DeclareMathOperator{\Id}{Id}
\DeclareMathOperator{\tr}{trace}
\DeclareMathOperator{\Ric}{Ric}
\DeclareMathOperator{\vol}{Vol}
\DeclareMathOperator{\Vol}{Vol}
\DeclareMathOperator{\Jac}{Jac}
\DeclareMathOperator*{\bary}{bar}
\DeclareMathOperator{\Minvol}{Minvol}
\DeclareMathOperator{\Minent}{Minent}
\DeclareMathOperator{\SL}{SL}
\DeclareMathOperator{\SO}{SO}
\DeclareMathOperator{\Sp}{\Sp}
\newcommand{\op}[1]{\operatorname{#1}}
\newcommand{\set}[1]{\left\{#1 \right\}}
\newcommand{\til}{\widetilde}
\newcommand{\floor}[1]{\left\lfloor #1 \right\rfloor}
\newcommand{\pa}{\partial }
\newcommand{\of}{\circ }
\providecommand{\to}{\longrightarrow }
\newcommand{\abs}[1]{\left\lvert #1 \right\rvert }
\newcommand{\norm}[1]{\left\Vert #1 \right\Vert }
\newcommand{\inner}[1]{\left\langle #1 \right\rangle }
\newcommand{\noi}{\noindent}
\newcommand{\cout}[1]{}
\definecolor{darkcyan}{rgb}{0. 0.65, 0.65}
\begin{document}

\author[Chris Connell]{Chris Connell$^\dagger$}
\thanks{$\dagger$ This work is supported by the Simons Foundation, under grant \#210442}
\author[Shi Wang]{Shi Wang}

\title[Positivity of simplicial volume]{Positivity of simplicial volume for nonpositively curved manifolds with a {R}icci-type curvature condition}

\address{Indiana University} 
\email{connell@indiana.edu} 

\address{Indiana University} 
\email{wang679@iu.edu}


\begin{abstract}
We show that closed manifolds supporting a nonpositively curved metric with
negative $(\floor{\frac{n}{4}}+1)$-Ricci curvature, have positive simplicial
volume. This answers a special case of a conjecture of Gromov. We also establish some related results concerning bounded cohomology and volume growth entropy for this new class of manifolds. 
\end{abstract}

\maketitle


\thispagestyle{empty} 

\section[]{Introduction}

In the early 1980s,  Gromov (\cite{Gromov82}) and Thurston (\cite[Chapter
6]{Thurston77}) introduced the notion of {\em simplicial volume} for any closed,
connected and orientable manifold $M$. The simplicial volume, denoted
$\norm{M}$, is a nonnegative real number which measures how efficiently the
fundamental class of $M$ can be represented by real singular cycles. It is
defined to be,
\[
\norm{M}=\inf \set{\sum_i \abs{a_i}\,:\, \left[\sum_i a_i \sigma_i\right]=[M]\in H_n(M,\R) },
\]
where the infimum is taken over all singular cycles with real coefficients
representing the fundamental class in the top homology group of $M$.

One reason for the importance of this invariant stems from its sitting at the
bottom of a chain of inequalities relating it to other invariants defined in
terms of geometric and dynamical properties of the manifold. Thus when it does
not vanish, there are numerous implications beyond the topology of $M$.

The simplicial volume has been shown to vanish for several large classes of manifolds. These include those admitting a nondegenerate circle action, or more generally a polarized $\mc{F}$-structure \cite{Fukaya87a,Cheeger90,Gromov82}, certain affine manifolds (\cite{Bucher16a}), and manifolds with amenable fundamental group (\cite{Gromov82}). When nonzero, the exact value of the simplicial volume has been calculated only for a few cases including hyperbolic manifolds \cite{Thurston77,Gromov82}), products of surfaces (\cite{Bucher-Karlsson08a}) and Hilbert modular surfaces (\cite{Loeh09}). More broadly, positivity of  simplicial volume has been established for only a few special classes of manifolds including higher genus surface-by-surface bundles (\cite{Hoster01}), negatively curved manifolds (\cite{Gromov82}), certain generalized graph manifolds (\cite{Connell-Serrato17}), and most closed locally symmetric spaces of higher rank  and some noncompact finite volume ones as well (\cite{Lafont06b,Bucher-Karlsson07a,Kim12,Loeh09}). Some connections to other invariants have been established (see e.g. \cite{Kuessner04,Kim15,Loeh09a,Loeh16,Bucher09,Bucher14,Bucher14a}), though calculations remain difficult.

The situation for admitting a nonpositively curved metric remains delicate. Some of these fall under the scope of the afore-mentioned results (e.g. admitting polarized $\mc{F}$-structures, certain affine structures, certain graph structures, higher rank locally symmetric structures, etc...), and as such vanishing or positivity of the simplical volume is known. Among the remainder, the following conjecture has been attributed to Gromov (\cite{Savage82} and see also \cite[p.11]{Gromov82}):
\begin{conjecture}
Any closed manifold of nonpositive curvature and negative Ricci curvature has $\norm{M}>0$.
\end{conjecture}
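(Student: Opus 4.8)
The plan is to bound $\|M\|$ from below by a positive multiple of $\Vol(M)$, the template behind every known positivity result (negatively curved manifolds, higher rank locally symmetric spaces). Dually it suffices to realize the Riemannian volume form of $M$ as a \emph{bounded} $n$-cocycle, i.e.\ to show that the comparison map $H^n_b(M;\R)\to H^n(M;\R)$ hits the fundamental cohomology class. Concretely I would construct a ``straightening'' $\mathrm{str}_*\colon C_*(M;\R)\to C_*(M;\R)$ that is chain homotopic to the identity, $\pi_1$-equivariantly defined on the universal cover $\widetilde M$, and such that (i) every straightened simplex has Riemannian volume bounded by a universal constant $C=C(M)$, and (ii) the straightened fundamental cycle still pairs with the volume form to give $\pm\Vol(M)$. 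Then $\|M\|\ge\Vol(M)/C>0$.

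A preliminary reduction: one may assume the volume entropy $h=h(\widetilde M)$ is positive. Otherwise $\widetilde M$ has subexponential volume growth, so $\pi_1(M)$ is virtually nilpotent (Gromov); since $M$ is closed, aspherical and nonpositively curved, a virtually nilpotent $\pi_1$ forces $\widetilde M$ to be flat, hence $\Ric\equiv 0$, contradicting $\Ric<0$. (This case is anyway forced to be empty for the conjecture to have a chance: amenable $\pi_1$ gives $\|M\|=0$.) With $h>0$ the boundary $\partial\widetilde M$ carries a $\pi_1$-equivariant family of finite measures $\{\mu_x\}_{x\in\widetilde M}$ in one conformal class, of dimension $h$ (a Patterson--Sullivan / Bowen--Margulis type family), together with the barycenter map $\beta$ sending a measure $\mu$ to the unique minimizer of $x\mapsto\int_{\partial\widetilde M}B_\xi(x)\,d\mu(\xi)$, $B_\xi$ the Busemann function at $\xi$. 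Given vertices $(x_0,\dots,x_n)$ one interpolates $\mu_{x_0},\dots,\mu_{x_n}$ over the standard $n$-simplex by convex combination and applies $\beta$, obtaining a smooth straightened simplex; equivariance pushes this down to $M$, and the usual cone/pushdown argument supplies the chain homotopy to the identity together with property (ii). So everything reduces to property (i): a uniform bound on $\bigl|\mathrm{Jac}\,\mathrm{str}_n\bigr|$.

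\textbf{This Jacobian estimate is the genuine obstacle.} Implicit differentiation of the defining identity $\int_{\partial\widetilde M}dB_\xi|_{\beta(\mu)}\,d\mu(\xi)=0$ expresses $D\beta$ through the positive-semidefinite form $Q_\mu(v)=\int_{\partial\widetilde M}\mathrm{Hess}\,B_\xi(v,v)\,d\mu(\xi)$ and the ``energy'' form $E_\mu=\int_{\partial\widetilde M}dB_\xi\otimes dB_\xi\,d\mu$, and controlling the Jacobian of the $n$-fold iterated barycenter reduces, via linear-algebra inequalities of the form $\det\le(\tfrac1k\,\mathrm{tr})^k$ for products of such forms, to \emph{lower} bounds for $Q_\mu$ restricted to and summed over $k$-dimensional subspaces. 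But $\mathrm{Hess}\,B_\xi$ is governed by the stable Riccati solution along the geodesic toward $\xi$, whose trace over a $k$-plane is essentially a sum of $k$ sectional curvatures --- an order-$k$ \emph{intermediate} Ricci quantity. Ordinary negative Ricci controls only the \emph{full} trace of $\mathrm{Hess}\,B_\xi$ (the sum over all $n-1$ transverse directions), which is too coarse to bound the Jacobian of top- or even middle-dimensional straightened simplices. One therefore appears forced either to strengthen the hypothesis to negative $k$-th intermediate Ricci for $k$ of order $n/4$ --- the threshold at which the determinant inequality closes, which is what the present paper's theorem does --- or to replace the naive volume cocycle by a smeared/$L^p$ variant, or to find an independent ``spreading'' estimate for $\{\mu_x\}$ on $\partial\widetilde M$ that substitutes for direct Jacobian control. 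Producing such a substitute for the one-dimensional averaging that Ricci alone supplies is the crux; absent it, the method yields only the intermediate-Ricci special case, and the conjecture as stated remains open.
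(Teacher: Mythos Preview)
The statement is a \emph{conjecture} in the paper, not a theorem; the paper does not claim to prove it and indeed only establishes the weakened version with $\op{Ric}_{\floor{n/4}+1}<0$ in place of $\op{Ric}<0$. Your proposal correctly recognizes this: you outline precisely the barycentric straightening machinery the paper uses for its Theorem~\ref{thm:main}, locate the crux in the Jacobian estimate $\det(H)^{1/2}/\det(K)$, explain why the Hessian-of-Busemann form $K$ is governed by intermediate-Ricci quantities rather than the full Ricci trace, and conclude that the method closes only at the $n/4$ threshold. That is exactly the paper's position and exactly its argument for the special case, so your analysis is accurate and there is nothing to correct.
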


\begin{remark}
Note that the nonpositive curvature assumption is necessary here as the 3-sphere admits a metric of negative Ricci curvature (\cite{Brooks89}), and yet has vanishing simplicial volume.
\end{remark}

Nonpositively curved Riemannian manifolds can be classified by their geometric
rank, which is the minimum dimension of parallel Jacobi fields along geodesics.
The higher rank manifolds turn out to have universal covers which are either
metric products or symmetric spaces of noncompact type
(\cite{Ballmann85d,Burns87}), and hence their simplicial volume is already
understood. The remaining class of geometric rank one manifolds is somewhat
mysterious and includes manifolds with both vanishing and nonvanishing
simplicial volume. 

In this paper, we consider the simplicial volume of arbitrary closed manifolds
admitting a nonpositively curved metric with an additional negative Ricci-type bound. More specifically, we establish a special case and weakened form of Gromov's conjecture under the
condition that we replace $\op{Ric}<0$ with $\op{Ric}_{\floor{\frac{n}{4}}+1}<0$ where,

\begin{defn}\label{def:k-ricci}
For $u,v\in T_xM$,
\[
\op{Ric}_{k}(u,v)=\sup_{\stackrel{V\subset T_xM}{\dim V=k}} \op{Tr}
R(u,\cdot,v,\cdot)|_V,
\]
where $R(u,\cdot,v,\cdot)|_V$ is the restriction of the curvature tensor to
$V\times V$, and thus the trace is with respect to any orthonormal basis of $V$.
Lastly, we set $\op{Ric}_{k}=\sup_{v\in T^1M}\op{Ric}_{k}(v,v)$.
\end{defn}

Namely, we show:

\begin{thm}\label{thm:main} Let $M$ be an oriented closed manifold of dimension $n$ admitting a
Riemannian metric of nonpositive curvature with $\op{Ric}_{\floor{\frac{n}{4}}+1}< 0$. Then the
simplicial volume satisfies $\norm{M}> 0$.
\end{thm}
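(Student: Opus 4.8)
The plan is to follow the bounded‑cohomology route to positivity of simplicial volume: I would produce an explicit bounded $n$‑cocycle pairing nontrivially with the fundamental class. By Gromov's duality principle, $\norm{M}>0$ as soon as the comparison map $H^n_b(M;\R)\to H^n(M;\R)$ hits the fundamental coclass; concretely, for any bounded singular $n$‑cocycle $\omega$ one has $\norm{M}\ge\langle[\omega],[M]\rangle/\norm{\omega}_\infty$. So it is enough to build a bounded $n$‑cocycle $\omega$ with $\langle[\omega],[M]\rangle\ne0$ and $\norm{\omega}_\infty<\infty$.

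The cocycle will come from a \emph{barycentric straightening}. Lift singular simplices to the universal cover $\widetilde M$, a Hadamard manifold on which $\Gamma=\pi_1(M)$ acts cocompactly, and fix a $\Gamma$‑equivariant conformal density $\{\mu_x\}_{x\in\widetilde M}$ on the boundary at infinity $\partial_\infty\widetilde M$; for geometric rank one this is Knieper's Patterson--Sullivan family attached to the Bowen--Margulis measure of maximal entropy. (Here one observes that $\op{Ric}_{\floor{\frac{n}{4}}+1}<0$ forbids a $(\floor{\frac{n}{4}}+1)$‑flat — the $k$‑fold trace of $R$ vanishes on the tangent space of a $k$‑flat — so by rank rigidity the only other possibilities for $\widetilde M$ are reducible, which the same flat obstruction rules out, or a higher‑rank symmetric space, which is covered by the known results for locally symmetric spaces.) Given vertices $x_0,\dots,x_k\in\widetilde M$ and barycentric coordinates $t\in\Delta^k$, set $\mathrm{st}(x_0,\dots,x_k)(t)=\operatorname{bar}\!\big(\sum_i t_i\,\mu_{x_i}\big)$, where $\operatorname{bar}(\nu)$ is the unique minimizer of $y\mapsto\int_{\partial_\infty\widetilde M}B_\xi(y)\,d\nu(\xi)$. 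This is $\Gamma$‑equivariant, compatible with faces, and, since $\widetilde M$ is contractible, $\Gamma$‑equivariantly homotopic to the identity; hence it descends to a chain self‑map of $C_*(M;\R)$ chain‑homotopic to the identity. Then $\omega(\sigma):=\int_{\Delta^n}\mathrm{st}(\widetilde\sigma)^*\mathrm{dvol}_M$ is a cocycle with $\langle[\omega],[M]\rangle=\Vol(M)>0$, and the whole problem reduces to the uniform estimate $\norm{\omega}_\infty=\sup_\sigma\big|\int_{\Delta^n}\mathrm{st}(\widetilde\sigma)^*\mathrm{dvol}_M\big|<\infty$.

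So the heart of the proof is a uniform bound on $\big|\op{Jac}\,\mathrm{st}(x_0,\dots,x_n)(t)\big|$ over all $x_i\in\widetilde M$ and all $t\in\Delta^n$. Differentiating the barycenter equation $\int\nabla B_\xi(y)\,d\nu_t(\xi)=0$ (with $\nu_t=\sum_i t_i\mu_{x_i}$) in $t$ yields $H_t\,\dot y=-\sum_i\dot t_i\,v_i$, where $v_i=\int\nabla B_\xi(y)\,d\mu_{x_i}(\xi)$ and $H_t=\int\operatorname{Hess}B_\xi(y)\,d\nu_t(\xi)\succeq0$, so that $\big|\op{Jac}\,\mathrm{st}\big|=\big|\det[\,v_1-v_0\mid\cdots\mid v_n-v_0\,]\big|/\det H_t$. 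As $\abs{\nabla B_\xi}\equiv1$, the numerator is at most $2^n$ by Hadamard's inequality, and the real work is to bound this ratio from above — delicate precisely because $H_t$ may be nearly degenerate, nonpositive curvature alone not forcing the stable Busemann Hessians to be positive definite. This is where $\op{Ric}_{\floor{\frac{n}{4}}+1}<0$ is used: through the Riccati equation $U'+U^2+R_{\dot\gamma}=0$ for the stable shape operators $U=\operatorname{Hess}B_\xi$, the hypothesis (which, by compactness of $T^1M$, holds with a uniform gap) limits the number of near‑kernel directions of each $\operatorname{Hess}B_\xi$, and averaging against the $\mu_{x_i}$ promotes this to uniform positive‑definiteness of $H_t$ on a subspace whose codimension is controlled by $\floor{\frac{n}{4}}$; a Besson--Courtois--Gallot / Lafont--Schmidt‑type linear‑algebra lemma then turns that into the Jacobian bound, the threshold $\floor{\frac{n}{4}}+1$ being exactly what the optimization in such a lemma tolerates.

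I expect the main obstacle to be this last step — transforming the \emph{averaged‑trace} control contained in $\op{Ric}_{\floor{\frac{n}{4}}+1}<0$ into honest uniform positivity of the averaged Hessians $H_t$. The curvature condition bounds traces, not individual eigenvalues; the near‑kernel directions of $\operatorname{Hess}B_\xi$ vary with $\xi$; so one must show that no common degenerate direction survives the average against the spread‑out densities $\mu_{x_i}$, and do so with constants uniform over the noncompact family of vertex configurations — and then fit this together with the combinatorics of the linear‑algebra lemma to land exactly on $k=\floor{\frac{n}{4}}+1$. Granting the Jacobian bound, $\norm{M}\ge\Vol(M)/\norm{\omega}_\infty>0$ is immediate; the accompanying statements about the degree‑$n$ comparison map and positivity of the volume growth entropy follow from the same construction, the latter using Gromov's inequality $\norm{M}\le C_n\,h(M)^n$.
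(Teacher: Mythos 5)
Your overall framing (barycentric straightening built from Knieper's Patterson--Sullivan measures, with positivity of $\norm{M}$ reduced to a uniform bound on the Jacobian of straightened top-dimensional simplices) is the same as the paper's, but the reduction you actually write down is where the argument breaks. You bound the numerator $\det[v_1-v_0\mid\cdots\mid v_n-v_0]$ crudely by Hadamard's inequality and then declare that the remaining task is a uniform lower bound on $\det H_t$, i.e.\ uniform positive definiteness of the averaged Busemann Hessian over all vertex configurations. No such bound exists: already in constant negative curvature, if all vertices run off to infinity toward a small cluster of boundary points, the measures $\mu_{x_i}$ concentrate, the barycenter escapes toward the boundary, and $H_t$ degenerates in the radial direction because each $\op{Hess}B_\xi$ annihilates the direction toward $\xi$; thus $\det H_t\to 0$ while the theorem still holds. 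The correct mechanism (in Besson--Courtois--Gallot, in Lafont--Schmidt, and in this paper) is to retain the structure of the numerator: by Cauchy--Schwarz it is controlled by $2^n\det\bigl(\int dB^2_{(y,\theta)}\,d\nu_t\bigr)^{1/2}$, and one bounds the \emph{ratio} $\det(H_\delta)^{1/2}/\det(K_\delta)$ of the second-moment form and the Hessian form, with each small eigenvalue of $K_\delta$ compensated by matched small eigenvalues of $H_\delta$. Your later sentence about positivity ``on a subspace of controlled codimension'' plus a linear-algebra lemma points in the right direction, but it contradicts your explicit reduction and is never substantiated.

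Even granting that correction, the heart of the proof --- which your sketch flags as the ``main obstacle'' but does not address --- is the quantitative link between $\op{Ric}_{\floor{\frac{n}{4}}+1}<0$ and the Busemann Hessians. The Riccati-equation remark does not supply it: in nonpositive (as opposed to strictly negative) curvature, $\op{Hess}B_{(x,\theta)}(Y_0,Y_0)$ is governed by the curvature along the entire ray $\gamma_{x\theta}$, not by the curvature at the single point $x$, so negativity of $R_v$ on a $3n/4$-dimensional subspace at $x$ gives no immediate lower bound on the Hessian there. The paper's new ingredient is exactly such a pointwise estimate, $DdB_{(x,\theta)}(Y_0,Y_0)\geq C\bigl(-\langle R(\gamma',Y_0)\gamma',Y_0\rangle\bigr)^{3/2}$, proved by integrating the stable Jacobi field equation together with a calculus lemma ($F\geq 0$ with $F''$ bounded above forces $\int_0^\infty F\geq cF(0)^{3/2}$). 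The resulting exponent $2/3$ in the matching inequality $dB^2_{(x,\theta)}(u,u)\leq C\,(DdB_{(x,\theta)}(v,v))^{2/3}$ for $u\in F_v$ is precisely what yields the threshold $\floor{\frac{n}{4}}+1$ (one needs $\tfrac{2}{3}\cdot\tfrac{n-k}{2}\geq k$), whereas your proposal simply asserts that the threshold is ``exactly what the optimization tolerates.'' Since this estimate and the ensuing eigenvalue matching constitute the actual content of the theorem, the proposal has a genuine gap at its essential step; the surrounding setup (duality/pairing formulation, rank-one reduction via rank rigidity, properties (1)--(3) of the straightening) is fine and matches the paper.
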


\begin{remark}
Note that the Ricci condition is necessary both in the conjecture and in Theorem
\ref{thm:main} since any manifold that splits as a product with $S^1$, e.g.
$M=N\times S^1$ for a nonpositively curved manifold $N$ is nonpositively curved
but has $\norm{M}=0$. More generally, any manifold with a polarized
$\mc{F}$-structure has $\norm{M}=0$ (see \cite{Cheeger90}). There are many
manifolds of nonpositive curvature admitting such an $\mc{F}$-structure, e.g.
the famous twisted product of surface by circle examples (see
\cite{Cheeger86a}).

Also, for any metric on $M$ with $\op{Ric}_{\floor{\frac{n}{4}}+1}< 0$ we may
compute an explicit bound for the simplicial volume in terms of the dimension
$n$ and its curvature tensor (up to second covariant derivatives). In principal
a bound for the simplicial volume can be obtained by taking the supremum of this
bound over all metrics, but we have not attempted to estimate this.
\end{remark}

\begin{remark}
Examples of spaces admitting $\op{Ric}_{\floor{\frac{n}{4}}+1}< 0$ can be
constructed by deforming negatively curved manifolds, e.g. in the neighborhood
of a closed geodesic for instance such that a few tangent directions have
vanishing curvature with the remainder being negative.

More interesting, and quite general, examples which do no admit negatively curved metrics, can be constructed as follows. Start with any closed manifold $N$ of nonpositive curvature and dimension $k$. Using the main result of \cite{TamNguyenPhan13}, we obtain a $k+1$-manifold $N_1$ containing $N$ as a totally geodesic submanifold and whose only planes of $0$-curvature are tangent to $N=N_0\subset N_1$. (This latter fact was not explicitly stated in her theorem, but follows from the strict hyperbolization and warped product construction used in its proof.) Iterating this construction we produce a sequence $N_1,N_2,\dots$ manifolds with totally geodesic embeddings $N_i\subset N_{i+1}$. For some $j\leq 3k$ this process yields a manifold $M=N_j$ of dimension $n=k+j$ which has $\op{Ric}_{\floor{\frac{n}{4}}+1}< 0$, as desired.

The simplest nontrivial example starts with $N$ a flat 2-torus and iterates six times to produce a nonpositively curved $8$-manifold $M$ with negative $\op{Ric}_{3}$-curvature and a single 2-flat which is isolated in the sense of \cite{Hruska05}. In fact, they showed that nonpositively manifolds with isolated flats are relatively hyperbolic. Setting $\Ga=\pi_1(M)$ and $\Ga'=\pi_1(N)$, by \cite{Mineyev-Yaman} (see also \cite{Franceschini15}) their relative bounded cohomology groups $H^*_b(\Ga,\Ga')$ surject onto the ordinary relative cohomology groups for any $\pi_1(M)$-module coefficients. The comparison maps between the relative long exact sequence form a commutative diagram,

\begin{centering}
\begin{tikzcd}
\cdots \rar & H_b^{*-1}(\Ga') \rar \ar{d}{\eta_1} & H_b^*(\Ga,\Ga') \rar \ar{d}{\eta_2} & H_b^*(\Ga) \rar \ar{d}{\eta_3} & H_b^*(\Ga') \rar \ar{d}{\eta_4} 
& \cdots \\
\cdots \rar & H^{*-1}(\Ga') \rar  & H^*(\Ga,\Ga') \rar & H^*(\Ga) \rar  & H^*(\Ga') \rar  
& \cdots
\end{tikzcd}
\end{centering}

Since $\Ga'\cong\Z^2$ is amenable, $H_b^*(\Ga')=0$ and hence $\eta_1$ and $\eta_4$ are the zero maps, so $H_b(\Ga,\Ga')\cong H_b(\Ga)$ and since $H^*(\Ga')=0$ for $*\geq 3$, $\eta_3$ is surjective for $*\geq 3$. In particular, this yields an alternate proof that $\norm{M}>0$ in this case. 

More generally, note that even when $\Ga'=\pi_1(N)$ is not amenable, $H^*(\Ga')=0$ for $*>\dim(N)$. If $\Ga$ is hyperbolic relative to $\Ga'$ so that $\eta_2$ is surjective, then it follows that $\eta_3$ is surjective for all $*>\dim(N)$ and $\norm{M}>0$.  If for $i\geq 1$ we cut $N_i$ along $N_{i-1}$, we obtain a manifold $N_i'$ with two boundary components $\pa^+N_i$ and $\pa^-N_i$. We suspect that in many cases $\pi_1(N_i')$ will be (strongly) hyperbolic relative to its boundary groups $\pi_1(\pa^+N_i)\cong \pi_1(N_{i-1})\cong \pi_1(\pa^-N_i)$ for each $i\geq 1$. If so, then $\Ga=\pi_1(M)$ will be (strongly) hyperbolic relative to $\Ga'=\pi_1(N)$ by Corollary 1.4 of \cite{Osin04} as it arises as an iterated HNN-extension of the $\pi_1(N_i)$ starting from $\pi_1(N)$.


Note that the existence of a closed flat in $M$ for some nonpositively curved
metric implies the existence of a $\Z^2$ subgroup in $\pi_1(M)$, which in turn
implies that $\pi_1(M)$ is not Gromov hyperbolic. In particular, such a smooth
manifold $M$, including many of these examples, cannot admit any negatively
curved metric. (In fact, the existence of any flat in a closed nonpositively
curved manifold is conjectured to imply the existence of a closed flat.)

Other interesting cases include starting with $N$ a closed locally symmetric space of higher rank. Combining this construction with gluing techniques across negatively curved portions of each $N_i$, we can construct many other types of interesting examples to which our theorems apply.
\end{remark}

Under stronger curvature bounds we obtain commensurately stronger statements about the bounded cohomology $H^*_b(M,\mathbb R)$ of these spaces. 
\begin{thm}\label{thm:bcohom}
Let $M$ be an closed oriented $n$-manifold admitting a
Riemannian metric of nonpositive curvature with $\op{Ric}_{k+1}<0$ for some $k\leq\floor{\frac{n}{4}}$, then the comparison map $\eta:H^*_b(M,\mathbb R) \rightarrow H^*(M,\mathbb R)$ is surjective when $*\geq 4k$.
\end{thm}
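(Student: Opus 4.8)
The plan is to prove Theorem~\ref{thm:bcohom} by straightening singular chains, in the tradition of Gromov and Thurston and the barycentric methods of Besson--Courtois--Gallot and Lafont--Schmidt, the curvature hypothesis entering to bound the dimension of flats and, via a Riccati comparison, to quantify the convexity of Busemann functions. First a reduction: since $H^*(M,\R)\cong H^*_{dR}(M)$ and the image of the comparison map $\eta$ is exactly the set of cohomology classes admitting a bounded singular cocycle representative, it suffices to show that for every closed $p$-form $\omega$ on $M$, with $p\ge 4k$, the cocycle $\sigma\mapsto\int_\sigma\omega$ is cohomologous to a bounded one. I would obtain this from a $\Ga$-equivariant chain endomorphism $\mathrm{str}_\bullet$ of the singular chain complex of $\widetilde M$, chain homotopic to the identity (e.g.\ via geodesic interpolation in the Hadamard space $\widetilde M$), depending on a simplex only through its ordered vertices, and carrying each simplex to a \emph{smooth} one. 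After descending to $M=\widetilde M/\Ga$, the cochain $c^{\mathrm{str}}_\omega(\sigma):=\int_{\mathrm{str}(\sigma)}\omega$ is a cocycle cohomologous to $\int_{(\cdot)}\omega$ with $\|c^{\mathrm{str}}_\omega\|_\infty\le \|\omega\|_\infty\sup_\sigma\vol_p(\mathrm{str}(\sigma))$, so everything reduces to a uniform bound on the Riemannian $p$-volume of straightened $p$-simplices when $p\ge 4k$.

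For the straightening I would fix a $\Ga$-equivariant family $\{\mu_x\}_{x\in\widetilde M}$ of fully supported probability measures on the ideal boundary $\partial_\infty\widetilde M$ (visual measures, or Besson--Courtois--Gallot measures $d\mu_x(\xi)=e^{-c\,\beta_\xi(x,x_0)}\,d\mu_{x_0}(\xi)$ with $c$ above the volume-growth entropy), let $\mathrm{bar}(\nu)$ denote the barycenter of a probability measure $\nu$ on $\partial_\infty\widetilde M$ (the unique critical point of $x\mapsto\int\beta_\xi(x,x_0)\,d\nu(\xi)$; unique because the $\mu_x$ are fully supported, making the averaged Busemann function strictly convex), and straighten the singular simplex with vertices $v_0,\dots,v_p$ to $\Delta^p\ni(t_0,\dots,t_p)\mapsto\mathrm{bar}\big(\sum_i t_i\mu_{v_i}\big)$. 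Compatibility with face maps, equivariance, and the chain homotopy to the identity are then routine.

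The heart of the argument is the volume bound. Differentiating the barycenter equation $\int d\beta_\xi(\mathrm{bar}\,\nu)\,d\nu(\xi)=0$ in the barycentric coordinates gives, at $b=\mathrm{bar}(\sum_i t_i\mu_{v_i})$,
\[
\partial_{t_j}b=-H_b^{-1}\Big(\int d\beta_\xi(b)\,d(\mu_{v_j}-\mu_{v_0})(\xi)\Big),\qquad H_b:=\int \mathrm{Hess}\,\beta_\xi(b)\,d\Big(\sum_i t_i\mu_{v_i}\Big)(\xi)\succeq 0 .
\]
Since $\|d\beta_\xi\|\equiv 1$ and $\|H_b\|$ is bounded above in terms of the lower curvature bound of $M$, this already yields $\vol_p(\mathrm{str}(\sigma))\le C_p\sup_b\big\|\Lambda^p H_b^{-1}\big\|$, but this raw bound is insufficient in the presence of flats (where $H_b$ has arbitrarily small eigenvalues), so one must argue more carefully. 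The curvature hypothesis enters through a Riccati comparison for stable Busemann Hessians --- the analogue of the classical fact that $\op{Ric}<0$ forces $\Delta\beta_\xi$ bounded away from $0$ on a closed nonpositively curved manifold: $\op{Ric}_{k+1}<0$ on $T^1M$ makes $\op{Tr}(\mathrm{Hess}\,\beta_\xi|_W)\ge\delta>0$ uniformly for every $(k+1)$-plane $W$, hence each $\mathrm{Hess}\,\beta_\xi(b)$ has at most $k$ eigenvalues below a fixed $\varepsilon_1>0$; an averaging/trace estimate on the orthogonal projections onto these small-eigenvalue subspaces then shows $H_b$ has at most $2k$ eigenvalues below a fixed $\varepsilon_2>0$, with the near-degenerate subspace tangent to a flat through $b$ (of dimension $\le k$, since a $(k+1)$-flat would violate $\op{Ric}_{k+1}<0$). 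Using this --- and, if necessary, an iterated straightening so that the barycentric directions of the outer simplex may be taken transverse to the near-degenerate subspace --- one shows that for $p\ge 4k$ the straightened $p$-simplices nevertheless have uniformly bounded $p$-volume, $\sup_\sigma\vol_p(\mathrm{str}(\sigma))\le V(n,k)<\infty$; the threshold $4k$ reflects the $\le k$-dimensional flats entering the estimate twice.

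I expect this last volume bound to be the main obstacle: one must show that the averaged Busemann Hessian $H_b$ degenerates only in boundedly many ``flat'' directions, is uniformly nondegenerate transversally, and interacts well with the defect vectors $\int d\beta_\xi\,d(\mu_{v_j}-\mu_{v_0})$ --- with all constants uniform over the compact quotient $M$ --- which requires combining the $\op{Ric}_{k+1}$ Riccati comparison, a careful analysis of the boundary measures near boundaries of flats, and the precise dimension count that produces exactly the threshold $4k$ rather than something weaker. Granting it, $c^{\mathrm{str}}_\omega$ is a bounded cocycle representing the class of $\omega$ for every closed $p$-form with $p\ge 4k$, so $\eta$ is surjective in all degrees $\ge 4k$. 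Finally, with $k=\floor{\frac n4}$ one has $4k\le n$, so $\eta$ is surjective in the top degree $n$; since $H^n(M,\R)$ is generated by the dual of the fundamental class of the closed oriented manifold $M$, this is equivalent to $\norm{M}>0$, recovering Theorem~\ref{thm:main}.
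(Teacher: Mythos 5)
Your overall route is the same as the paper's: barycentric straightening via Patterson--Sullivan-type measures, a uniform bound on the $p$-Jacobian (equivalently the $p$-volume) of straightened simplices for $p\geq 4k$, and then the duality/de Rham argument of Lafont--Wang to convert that bound into surjectivity of $\eta$ in degrees $\geq 4k$; the paper's actual proof is exactly this, obtained by running Theorem \ref{thm:Jacobian estimate} with the same $k$ but restricted to $p$-dimensional subspaces and then citing \cite{Lafont-Wang:15}. However, the step you yourself flag as ``the main obstacle'' --- the uniform volume bound --- is precisely the paper's central technical content, and your sketch of how $\op{Ric}_{k+1}<0$ produces it is both incomplete and in places off target, so as written there is a genuine gap rather than a proof.

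Concretely: knowing that the averaged Hessian (your $H_b$, the paper's $K$) has boundedly many small eigenvalues does not bound the Jacobian, and your raw bound via $\partial_{t_j}b=-H_b^{-1}(\cdots)$ with $\Vert d\beta_\xi\Vert\equiv 1$ is, as you note, insufficient. What is needed is the BCG-type ratio $\det(H)^{1/2}/\det(K)$ with $H=\int dB^2\,d\nu$, together with a \emph{pointwise eigenvalue matching}: if $v$ is the small eigenvector of $K$, then $dB^2_{(x,\theta)}(u,u)\leq C\bigl(DdB_{(x,\theta)}(v,v)\bigr)^{2/3}$ for all $u$ in a subspace $F_v$ of dimension at least $3n/4$ (Lemma \ref{lem:Busemann estimate}). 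This rests on the quantitative stable-Jacobi-field estimate $DdB_{(x,\theta)}(Y_0,Y_0)\geq C\bigl(-\langle R(\gamma',Y_0)\gamma',Y_0\rangle\bigr)^{3/2}$ of Theorem \ref{thm:key estimate}, whose proof requires bounds on $\nabla R$ and $\nabla^2 R$ and Lemma \ref{lem:cal}; it is not an off-the-shelf Riccati comparison, since smallness of the Busemann Hessian in a direction does not by itself force smallness of the corresponding curvature (only the nullspace statement is immediate). Your claim that the averaged Hessian has at most $2k$ small eigenvalues neither follows from your hypotheses nor is what the argument uses (the paper shows $\lambda_{k+1}\geq C_0/(k+1)$ via Corollary \ref{cor:n/4 small} and Proposition \ref{prop:average}, so at most $k$ eigenvalues are small), and the heuristic that $4k$ ``reflects flats entering the estimate twice'' is not the actual mechanism: the threshold comes from the exponent bookkeeping $(p-k)\cdot\tfrac12\cdot\tfrac23\geq k$, i.e.\ each of the at most $k$ small denominator eigenvalues must be cancelled by three numerator eigenvalues of size $O(\lambda_1^{2/3})$, which is exactly the condition $p\geq 4k$. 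Without the $3/2$-power estimate and this matching, the uniform volume bound --- and hence the surjectivity claim --- does not follow.
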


The case $k=\floor{\frac{n}{4}}$, i.e. surjectivity of the top comparison map, is equivalent to Theorem \ref{thm:main}.

We prove our first result using estimates for the Jacobian of the barycenter
map. Using these we can also prove an extension to the case of maps.

\begin{thm}\label{thm:vol-ent}
Let $M$ be a closed oriented nonpositively curved Riemannian $n$-manifold 
with $\op{Ric}_{\floor{\frac{n}{4}}+1}<0$. Then there is a scale-invariant constant
$C>0$ depending only on the metric of $\til{M}$ such that for any map $f:N\to M$ from a closed oriented Riemannian manifold $N$ we have 
 \[
 h(N)^{n}\vol(N)\geq C \abs{\deg f}h(M)^n\vol(M)
 \]
 where $h(N)$ is the volume growth entropy of $N$.
\end{thm}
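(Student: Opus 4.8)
The plan is to run the natural-map (barycenter) method of Besson--Courtois--Gallot, with the pointwise Jacobian estimate for the barycenter map supplied by the machinery behind Theorem \ref{thm:main}. We may assume $\deg f\neq 0$ (otherwise the inequality is trivial) and $\dim N=n$. Fix $c>h(N)$ and lift $f$ to a map $\til f\colon\til N\to\til M$ equivariant with respect to $f_*\colon\pi_1(N)\to\pi_1(M)$; it is Lipschitz since $f$ is smooth and $N$ is compact. For $x\in\til N$ let $\sigma^c_x$ be the measure on $\til N$ with density $y\mapsto e^{-c\,\rho_{\til N}(x,y)}$ against $d\vol_{\til N}$. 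Since $c>h(N)$, the exponential upper bound on volume growth in $\til N$ makes $\sigma^c_x$ a finite measure with $\int\rho_{\til N}(x,\cdot)^2\,d\sigma^c_x<\infty$, and it depends smoothly on $x$. Push it forward to $\mu^c_x=\til f_*\sigma^c_x$ on $\til M$. Because $\til M$ is a Hadamard manifold, the function $z\mapsto E_x(z):=\int_{\til M}d^2_{\til M}(z,w)\,d\mu^c_x(w)$ is finite, smooth in $(x,z)$, and strictly convex with $\operatorname{Hess}_z E_x\geq 2\,\sigma^c_x(\til N)\,g>0$; it therefore has a unique minimum, and we set $\Phi_c(x)=\bary(\mu^c_x)$ to be this point. (Equivalently one may project the $\mu^c_x$ to $\partial\til M$ along geodesic rays and minimize an average Busemann function, as in the classical treatment.)

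Equivariance of the family, $\mu^c_{\ga x}=(f_*\ga)_*\mu^c_x$, makes $\Phi_c$ equivariant, so it descends to a map $F_c\colon N\to M$, which is smooth by the implicit function theorem applied to $\grad_z E_x=0$ (the Hessian being nondegenerate). Interpolating along the unique geodesic from $\til f(x)$ to $\Phi_c(x)$ gives an equivariant homotopy, which descends to a homotopy $F_c\simeq f$; hence $\deg F_c=\deg f$. Writing $\omega_M$ for the Riemannian volume form and using the change-of-variables formula for the degree,
\[
\abs{\deg f}\,\vol(M)=\Bigl|\int_N F_c^{*}\omega_M\Bigr|\leq\int_N\abs{\Jac F_c}\,d\vol_N .
\]

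The crux is the pointwise bound
\[
\abs{\Jac F_c}(x)\leq C^{-1}\Bigl(\tfrac{c}{h(M)}\Bigr)^{n}\qquad\text{for all }x\in N,
\]
with a scale-invariant $C=C(\til M)>0$. Differentiating the barycenter equation $\int_{\til M}\grad_z\tfrac12 d^2(z,w)\big|_{z=\Phi_c(x)}\,d\mu^c_x(w)=0$ in $x$ writes $d(\Phi_c)_x$ as a ``ratio'' $H^{-1}H'$ of two symmetric bilinear forms on $T_{\Phi_c(x)}\til M$: the form $H=\int\operatorname{Hess}_z\tfrac12 d^2(z,w)\,d\mu^c_x(w)$ built from the target geometry, and $H'$ built from the $x$-derivative of the density $e^{-c\rho_{\til N}(x,\cdot)}$ together with the differential of $\til f$. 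Taking determinants reduces the estimate to the algebraic/spectral inequality underlying the proof of Theorem \ref{thm:main}, and it is exactly here that the hypothesis $\op{Ric}_{\floor{n/4}+1}<0$ enters: it is the mechanism that converts the $(\floor{n/4}+1)$-Ricci pinching into the spectral control of $H$ needed to bound $\det(H^{-1}H')$, with the factor $(c/h(M))^{n}$ arising through the target's volume growth exactly as in the classical argument. Scale invariance of $C$ follows from how both sides of this inequality transform under rescaling the metric of $\til M$. I would cite this estimate from the earlier sections rather than reprove it.

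Combining the two displays gives $\abs{\deg f}\,\vol(M)\leq C^{-1}(c/h(M))^{n}\vol(N)$, that is, $C\,\abs{\deg f}\,h(M)^{n}\vol(M)\leq c^{n}\vol(N)$, and letting $c\searrow h(N)$ yields the theorem. I expect the only genuine obstacle to lie in that pointwise Jacobian estimate --- controlling the averaged Hessian $H$ from below using only the $(\floor{n/4}+1)$-Ricci bound, which is genuinely weaker than $K<0$ or $\op{Ric}<0$ --- and this is precisely the technical content already developed for Theorem \ref{thm:main}. The remaining ingredients (well-definedness and smoothness of the barycenter map, the homotopy $F_c\simeq f$, and convergence of the integrals) are routine consequences of Hadamard-manifold convexity and the exponential volume-growth bound on $\til N$.
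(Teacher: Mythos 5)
Your global strategy is the paper's: build a family of natural maps $F_c$ homotopic to $f$, bound $\abs{\Jac F_c}$ pointwise by a constant times $c^n$, integrate against the degree, and let $c\searrow h(N)$. But the construction of the natural map diverges at the decisive point, and this creates a genuine gap. You take the barycenter of the pushed-forward measure $\til f_*\sigma^c_x$ \emph{on $\til M$ itself}, defined by minimizing $E_x(z)=\int_{\til M} d^2(z,w)\,d\mu^c_x(w)$, whereas the paper first convolves $\til f_*\mu^s_y$ with the Patterson--Sullivan measures $\nu_z$, producing a measure $\sigma^s_y$ with \emph{full support on $\partial\til M$}, and takes the barycenter of the averaged Busemann functions. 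This is not a cosmetic difference: the only spectral estimate available under the weak hypothesis $\op{Ric}_{\floor{\frac{n}{4}}+1}<0$ is Theorem \ref{thm:Jacobian estimate}, which bounds $\det\big(\int dB^2\,d\nu\big)^{1/2}\big/\det\big(\int DdB\,d\nu\big)$ for measures $\nu$ fully supported on the ideal boundary; it says nothing about the forms $\int \operatorname{Hess}_z\tfrac12 d^2(z,w)\,d\mu^c_x(w)$ and the mixed first-derivative form that appear when you implicitly differentiate your barycenter equation. The sentence ``taking determinants reduces the estimate to the algebraic/spectral inequality underlying Theorem \ref{thm:main}'' is therefore exactly where the proof has to live, and as set up it does not go through.

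Concretely, with the $d^2$-barycenter the first-derivative factor is $\grad_z\tfrac12 d^2(z,w)$, whose norm is $d(z,w)$ and hence unbounded; after Cauchy--Schwarz the numerator carries the second moment $\int d(\Phi_c(x),w)^2\,d\bar\mu^c_x(w)$, which is not uniformly bounded in $x$, $N$, $f$, $c$. The hoped-for compensation from the denominator fails precisely where the hypotheses permit it: in flat directions (allowed, since only $(\floor{\frac{n}{4}}+1)$-Ricci negativity is assumed) one has $\operatorname{Hess}_z\tfrac12 d^2(z,w)\geq 1$ but no more, while the numerator in the same directions can be of size $d(z,w)$, so the determinant ratio is not bounded by $C c^n$. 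Your parenthetical alternative --- projecting $\mu^c_x$ radially to $\partial\til M$ --- does not repair this either: the projected measure lives on the radial shadow of $\til f(\til N)$ and need not have full support, while full support is an explicit hypothesis both of Lemma \ref{lem:strictly convex} (well-definedness and strict convexity) and of Theorem \ref{thm:Jacobian estimate}. The paper's convolution with the family $\{\nu_z\}$ is exactly the device that (i) produces a fully supported boundary measure and (ii) makes the first-derivative term the unit-norm form $dB$, so that $\abs{\Jac F_s}\leq Cs^n$ holds uniformly (Theorem \ref{thm:JacF}). A smaller point: the factor $h(M)^n$ does not come out of the pointwise Jacobian bound here; the paper first proves $h(N)^n\vol(N)\geq C_1\abs{\deg f}\vol(M)$, deduces $h(M)>0$ by applying this to the identity map, and only then sets $C=C_1/h(M)^n$, so your claimed bound $\abs{\Jac F_c}\leq C^{-1}(c/h(M))^n$ is not what the machinery delivers under these hypotheses.
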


\begin{remark}
The constant $C>0$ in the theorem above can be chosen independently of $\til{M}$ if and only if there is no sequence of maps $f_i:N_i\to M_i$ between manifolds satisfying the hypotheses such that $\frac{\abs{\deg f_i} h(M_i)^n\vol(M_i)}{h(N_i)^n\vol(N_i)}\to \infty$. We know of no such sequence, but if $C$ can be chosen universally then we must have $C<1$ as can be seen by taking $M$ to be a hyperbolic 3-manifold and $N$ any other metric on $M$ and $f=\Id$. 

On the other hand, for a fixed topological manifold $M$ if there is no sequence of nonpositively curved metrics $(M,g_i)$ with $\op{Ric}_{\floor{\frac{n}{4}}+1}(g_i)<0$ and $h(M,g_i)^n\Vol(M,g_i)\to \infty$ then the supremum of constants $C$ such that the theorem still holds will be a topological invariant of $M$. 

Lastly, the assumption that $N$ and $M$ be oriented can be removed provided we use the appropriate generalization of $\deg(f)$ (see e.g. \cite{Olum53}).
\end{remark}

In Section \ref{sec:straight} we outline the generalized straightening technique we are employing. Then we recall a few properties of Patterson-Sullivan measures on rank one manifolds in Section \ref{sec:PS}, and apply this to barycentric straightening in Section \ref{sec:barystraight}. The critical estimates needed are established in Section \ref{sec:Jacob} and used to prove Theorem \ref{thm:main} and then Theorem \ref{thm:vol-ent} in Section \ref{section:natural}. Finally, in Section \ref{sec:Apps} we will present the proof of Theorem \ref{thm:bcohom} and present some other direct corollaries and applications of these theorems to the co-Hopf property of groups and positivity of some related topological invariants.

\subsection*{Acknowledgements}
We would like to thank T. T\^{a}m Nguy$\tilde{\hat{\mathrm{e}}}$n Phan for pointing out to us the examples relying on her construction in \cite{TamNguyenPhan13}, and also the anonymous referee for pointing out \cite{Mineyev-Yaman}.

\section{Straightening method}\label{sec:straight}

To prove the positivity of simplicial volume of certain manifolds, we follow the
general approach of Thurston \cite{Thurston77}. The essential idea is that if a
manifold admits a straightening, then its simplicial volume is positive. For
simplicity, we follow the notion of \cite{Lafont06b} as a brief summary of
Thurston's approach.

\begin{defn}\label{def:straightening} (\cite{Lafont06b}) Let $\til{M}^n$ be the universal
cover of an $n$-dimensional manifold $M^n$. We denote by $\Gamma$ the fundamental group
of $M^n$, and by $C_{\ast}(\til{M}^n)$ the real singular chain complex of
$\til{M}^n$. Equivalently, $C_{k}(\til{M}^n)$ is the free $\mathbb{R}$-module
generated by $C^0(\Delta^k,\til{M}^n)$, the set of singular $k$-simplices in
$\til{M}^n$, where $\Delta^k$ is equipped with some fixed Riemannian metric. We say a
collection of maps $st_k:C^0(\Delta^k,\til{M}^n)\rightarrow C^0(\Delta^k,\til{M}^n)$
is a straightening if it satisfies the following conditions:
\begin{enumerate}
	\item the maps $st_k$ are $\Gamma$-equivariant,
	\item the maps $st_{\ast}$ induce a chain map $st_{\ast}:C_{\ast}(\til{M}^n,\mathbb
	R)\rightarrow C_{\ast}(\til{M}^n,\mathbb R)$ that is $\Gamma$-equivariantly chain
	homotopic to the identity,
	\item the image of $st_n$ lies in $C^1(\Delta^n, \til{M}^n)$, that is, the top
	dimensional straightened simplices are $C^1$,
	\item there exists a constant $C$ depending on $\til{M}^n$ and the chosen
	Riemannian metric on $\Delta^n$, such that for any $f\in C^0(\Delta^n, \til{M}^n)$,
	and corresponding straightened simplex $st_n(f):\Delta^n\rightarrow\til{M}^n$, there
	is a uniform upper bound on the Jacobian of $st_n(f)$:
	$$|\Jac(st_n(f))(\delta)|\leq C$$
	for all $\delta\in \Delta^n$.
\end{enumerate}
\end{defn}

\begin{thm}\label{thm:straightening} (\cite{Thurston77}\cite{Lafont06b}) If $\til{M}^n$ admits a straightening, then the
simplicial volume of $M$ is positive.
\end{thm}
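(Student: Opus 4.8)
The plan is the classical Thurston argument, which deduces positivity of $\norm{M}$ from the existence of a straightening with a uniform Jacobian bound on top simplices. First I would fix a fundamental class $[M]\in H_n(M,\mathbb R)$ and an arbitrary real cycle $z=\sum_i a_i\sigma_i$ representing it, where $\sigma_i:\Delta^n\to M$ are singular simplices. Lifting to the universal cover $\til M$ and applying $st_n$, then pushing back down via the covering projection $p:\til M\to M$, produces a new cycle $z'=\sum_i a_i\, \overline{st_n}(\sigma_i)$ where $\overline{st_n}(\sigma_i)$ denotes the straightened simplex on $M$; this is well-defined and still a cycle because $st_*$ is $\Gamma$-equivariant (condition (1)) and a chain map. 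Moreover, because $st_*$ is $\Gamma$-equivariantly chain homotopic to the identity (condition (2)), the induced map on $H_*(M,\mathbb R)=H_*(\Gamma)$-type homology is the identity, so $z'$ also represents $[M]$ and $\norm{z'}_1=\sum_i\abs{a_i}\le\norm{z}_1$. Thus without loss of generality the infimum defining $\norm{M}$ can be computed over straightened cycles only.

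Next I would bring in the volume form. Let $\omega$ be the Riemannian volume form on $M$, normalized so that $\int_M\omega=\vol(M)$. For any straightened cycle $z'=\sum_i a_i\,\tau_i$ representing $[M]$, pairing with $\omega$ (legitimate because the $\tau_i=\overline{st_n}(\sigma_i)$ are $C^1$ by condition (3), so the pullbacks $\tau_i^*\omega$ are integrable $n$-forms on $\Delta^n$) gives
\[
\vol(M)=\int_M\omega=\left\langle z',[\omega]\right\rangle=\sum_i a_i\int_{\Delta^n}\tau_i^*\omega.
\]
Here I use that $[\omega]$ evaluates on $[M]$ to $\vol(M)$, and that the pairing of a cocycle with a cycle depends only on homology classes. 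Now estimate each term: $\int_{\Delta^n}\tau_i^*\omega=\int_{\Delta^n}\Jac(\tau_i)(\delta)\,d\delta$ where the Jacobian is taken with respect to the fixed metric on $\Delta^n$ and the Riemannian metric downstairs. Since the straightening on $\til M$ has $\abs{\Jac(st_n(f))}\le C$ (condition (4)) and the covering projection is a local isometry, we get $\abs{\Jac(\tau_i)}\le C$ pointwise, hence $\abs{\int_{\Delta^n}\tau_i^*\omega}\le C\cdot\Vol(\Delta^n)$, where $\Vol(\Delta^n)$ is the volume of the model simplex in its fixed metric.

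Combining the two displays yields
\[
\vol(M)=\left|\sum_i a_i\int_{\Delta^n}\tau_i^*\omega\right|\le C\,\Vol(\Delta^n)\sum_i\abs{a_i}=C\,\Vol(\Delta^n)\,\norm{z'}_1\le C\,\Vol(\Delta^n)\,\norm{z}_1.
\]
Taking the infimum over all representing cycles $z$ gives $\norm{M}\ge \vol(M)/\bigl(C\,\Vol(\Delta^n)\bigr)>0$, since $\vol(M)>0$ for any closed Riemannian manifold. I do not expect a serious obstacle here: the only points requiring care are (a) checking that straightening commutes with the deck action closely enough to descend to a well-defined operation on chains in $M$ — this is exactly what conditions (1) and (2) provide — and (b) justifying the integration-by-pullback and the change-of-variables bound, which needs the $C^1$ regularity of condition (3) and the uniform bound of condition (4); both are built into Definition \ref{def:straightening}. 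The essential content is that a single uniform Jacobian bound converts the topological $\ell^1$-norm into a lower bound proportional to the (fixed, positive) Riemannian volume of $M$.
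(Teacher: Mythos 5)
Your argument is correct and is exactly the classical Thurston/Lafont--Schmidt proof that the paper cites for this theorem rather than reproving: straighten a representing cycle using conditions (1)--(2) to preserve the class, then pair with the volume form and use the $C^1$ regularity and uniform Jacobian bound of conditions (3)--(4) to get $\norm{M}\geq \vol(M)/(C\,\Vol(\Delta^n))>0$. No gaps of substance; the only points you gloss (descent to $M$ and the de Rham/singular pairing on $C^1$ cycles) are standard and handled just as you indicate.
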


Therefore, the question can be solved if we can find a straightening. It is
worth noting that in the above theorem, one need only a weaker condition than
$(4)$ of Definition \ref{def:straightening}: that there is a uniform upper bound on the volume--the intergral of Jacobian--among all top dimensional straightened
simplices. In fact, for a non-positively curved manifold, it is not difficult to
find a collection of maps that satisfies only $(1),(2)$ and $(3)$ of Definition
\ref{def:straightening}. For instance, given any simplex, we can inductively
take the geodesic coning of its ordered vertices, but it is not so clear whether
the straightened simplices have uniformly bounded Jacobian or volume. However,
in the context of locally symmetric spaces of non-compact type excluding
$\mathbb H^2$, and $\SL(3,\mathbb R)/\SO(3)$ (See also \cite{Bucher-Karlsson07a}
for another type of straightening in the case of $\SL(3,\mathbb R)/\SO(3)$),
Lafont and Schmidt \cite{Lafont06b} introduced the barycentric straightening to
show the positivity of simplicial volume, where the Jacobian estimate in
$(4)$ of Definition \ref{def:straightening} relies on previous work of Connell
and Farb \cite{Connell03,Connell03c,Connell17}.

In this article, we will apply the barycentric straightening to geometric rank one
manifolds. In higher rank symmetric spaces $X=G/K$ the Patterson-Sullivan
measures are $K$-invariant and are nicely supported on the Furstenberg boundary
of $X$, the Jacobian estimate then turns into a careful analysis on certain Lie
groups and Lie algebras, losing symmetries in geometric rank one, and
lacking tools from Lie theory, we can not transplant the estimate to the situation
we have. Instead we will relate the Hessian of Busemann functions with sectional
curvatures to make the eigenvalue matching. Our estimate allows some presence of
zero curvatures of the manifold, as long as they occur in small dimensional subspaces, that is,
for any direction $v$ in the tangent bundle, we allow up to
$\floor{\frac{n}{4}}+1$ many directions (including $v$ itself) to have
zero curvatures with $v$. We will show that, for manifolds satisfying our
\emph{negative $(\floor{\frac{n}{4}}+1)$-Ricci curvature condition}, the
barycentric straightening is a straightening in the sense of Definition
\ref{def:straightening}. As a result, all closed nonpositively curved manifolds
with negative $(\floor{\frac{n}{4}}+1)$-Ricci curvature have positive
simplicial volume.

\section{Patterson-Sullivan measures and barycenters}\label{sec:PS}

We will briefly discuss in this section the Patterson-Sullivan measures in
geometric rank one spaces, following the work of Knieper \cite{Knieper98}. See
also the original Patterson-Sullivan theory for Fuchian groups
\cite{Patterson76}, and in higher rank symmetric spaces \cite{Albuquerque99}.

We fix the notation and let $M$ be a compact nonpositively curved geometric rank one
manifold, $\til{M}$ the universal cover of $M$, and $\Gamma$ the fundamental group of
$M$. In \cite{Knieper98}, Knieper showed that there exists a unique family of finite Borel
measures $\{\mu_x\}_{x\in \til{M}}$ fully supported on $\partial \til{M}$, called the
Patterson-Sullivan measures, which satisfies:
\begin{enumerate}
\item $\mu_x$ is $\Gamma$-equivariant, for all $x\in \til{M}$,
\item $\frac{d\mu_x}{d\mu_y}(\theta)=e^{hB(x,y,\theta)}$, for all $x,y\in \til{M}$, and
$\theta\in \partial\til{M}$,
\end{enumerate}
where $h$ is the volume entropy of $M$, and $B(x,y,\theta)$ is the Busemann function of
$\til{M}$. Recall that, the Busemann function $B$ is defined by
$$B(x,y,\theta)=\lim_{t\rightarrow \infty}\big(d_{\til{M}}(y,\gamma_\theta(t))-t\big)$$
where $\gamma_\theta(t)$ is the unique geodesic ray from $x$ to $\theta$.

We fix a basepoint $O$ in $\til{M}$ and shorten $B(O,y,\theta)$ to just $B(y,\theta)$.
We note that for fixed $\theta\in \til{M}$ the Busemann function $B(x,\theta)$ is
convex on $\til{M}$, and the nullspace of its Hessian $DdB_{(x,\theta)}$ in direction
$v_{x\theta}$--connecting $x$ to $\theta$ have zero sectional curvature with
$v_{x\theta}$ (the converse is not true). Furthermore, if $\til{M}$ is assumed to be
Ricci negative, and if $\nu$ is any finite Borel measure fully supported on
$\partial\til{M}$, by taking the integral of $B(x,\theta)$ with respect to $\nu$, we
obtain a strictly convex function (see the lemma below)
$$x\mapsto \mathcal{B}_{\nu}(x):=\int_{\partial \til{M}}B(x,\theta)d\nu(\theta)$$

Hence we can define the barycenter $\op{bar}(\nu)$ of $\nu$ to be the unique
point in $\til{M}$ where the function attains its minimum. It is clear that
this definition is independent of the choice of basepoint $O$. The following
lemma shows why the above function is strictly convex.

\begin{lem}\label{lem:strictly convex} Following the above notion, if we assume
$\til{M}$ have strictly negative Ricci curvature, and $\nu$ be any finite Borel measure
that is fully supported on $\partial\til{M}$, then the function
	$$x\mapsto \mathcal{B}_{\nu}(x)$$
	is strictly convex.
\end{lem}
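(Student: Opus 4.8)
The plan is to exhibit the Hessian of $\mc{B}_\nu$ as positive definite at every point of $\til{M}$; since we will see $\mc{B}_\nu$ is $C^2$, this gives strict convexity. Because $M$ is compact, $\til{M}$ has bounded geometry, and the Busemann functions $B(\cdot,\theta)$ are $C^2$ with the assignment $(x,\theta)\mapsto DdB_{(x,\theta)}$ continuous; moreover $\abs{B(x,\theta)}\le d_{\til{M}}(O,x)$, so the integral defining $\mc{B}_\nu$ converges for the finite measure $\nu$ and one may differentiate under the integral sign to obtain
\[
Dd\mc{B}_\nu(x)=\int_{\pa\til{M}}DdB_{(x,\theta)}\,d\nu(\theta),
\]
a symmetric bilinear form on $T_x\til{M}$. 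Each $B(\cdot,\theta)$ is convex, so each $DdB_{(x,\theta)}$ is positive semidefinite; hence $Dd\mc{B}_\nu(x)$ is positive semidefinite and $\mc{B}_\nu$ is at least convex. The content of the lemma is that there is no nonzero vector in the kernel.

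So fix $x\in\til{M}$ and suppose $v\in T_x\til{M}$ is a unit vector with $Dd\mc{B}_\nu(x)(v,v)=0$. Then $\theta\mapsto DdB_{(x,\theta)}(v,v)$ is a continuous, nonnegative function on $\pa\til{M}$ whose integral against $\nu$ vanishes; since $\nu$ has full support this forces $DdB_{(x,\theta)}(v,v)=0$ for every $\theta\in\pa\til{M}$. By the property recalled immediately before the lemma, a vector in the nullspace of $DdB_{(x,\theta)}$ spans a plane of zero sectional curvature together with the direction $v_{x\theta}$ from $x$ to $\theta$; thus $K(v,v_{x\theta})=0$ for all $\theta\in\pa\til{M}$.

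Finally, complete $v=e_1$ to an orthonormal basis $e_1,\dots,e_n$ of $T_x\til{M}$. On the Hadamard manifold $\til{M}$ every unit vector $w$ is the initial velocity $v_{x\theta}$ of the geodesic ray from $x$ to some unique boundary point $\theta$; applying the previous paragraph to $w=e_i$ for $i\ge 2$ gives $K(v,e_i)=0$, hence $\op{Ric}(v,v)=\sum_{i=2}^n K(v,e_i)=0$, contradicting $\op{Ric}<0$. Therefore $Dd\mc{B}_\nu(x)$ has trivial kernel for each $x$, i.e.\ is positive definite everywhere, so $\mc{B}_\nu$ is strictly convex.

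The one point that deserves care is the regularity input underlying the displayed formula: that the Busemann functions of $\til{M}$ are genuinely $C^2$ with Hessian varying continuously in $(x,\theta)$, so that differentiation passes inside the integral and the integrand $\theta\mapsto DdB_{(x,\theta)}(v,v)$ is continuous — the continuity is exactly what lets one promote vanishing $\nu$-almost everywhere to vanishing everywhere on $\pa\til{M}$. Granting this, the remaining argument is elementary, using only the stated link between the nullspace of $DdB$ and flat planes and the definition of Ricci curvature.
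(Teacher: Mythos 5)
Your proof is correct and follows essentially the same route as the paper: positive definiteness of the integrated Hessian via continuity of $\theta\mapsto DdB_{(x,\theta)}(v,v)$, full support of $\nu$, the link between the nullspace of $DdB_{(x,\theta)}$ and planes of zero curvature through $v_{x\theta}$, and the negative Ricci hypothesis. The only difference is organizational — you argue by contradiction that the integrand vanishes for every $\theta$ and sum over an orthonormal basis, while the paper exhibits a single $\theta_0$ with $v_{x\theta_0}\perp u$ and a neighborhood where the Hessian is bounded below — which is the same argument in contrapositive form.
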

\begin{proof} It is equivalent to show that the Hessian
	$$\int_{\partial\til{M}}DdB_{(x,\theta)}(\cdot,\cdot)d\nu(\theta)$$
	is positive definite. To see this, let $u\in T^1_x\til{M}$ be an arbitrary unit
	vector, we claim there exists $\theta_0\in \partial\til{M}$ such that
	$v_{x\theta_0}$ is orthogonal to $u$, and $DdB_{(x,\theta_0)}(u,u)> 0$. If not,
	$DdB_{(x,\theta)}(u,u)=0$ for all $\theta$ with $v_{x\theta}$ orthogonal to $u$, this
	implies by comparison theorem that the sectional curvatures of the two planes spanned
	by $v_{x\theta}$ and $u$ are all $0$ (See also Theorem \ref{thm:key estimate} for an
	explicit estimate), which contradicts with the fact that the Ricci curvature in
	direction $u$ is strictly negative. Therefore, there exists $\theta_0\in
	\partial\til{M}$ so that $DdB_{(x,\theta_0)}(u,u)=\delta_0>0$. By continuity, there
	exists an open neighborhood $U$ of $\theta_0$, such that
	$DdB_{(x,\theta)}(u,u)>\delta_0/2$ for all $\theta\in U$. We hence have
	$$\int_{\partial\til{M}}DdB_{(x,\theta)}(u,u)d\nu(\theta)\geq
	\int_{U}DdB_{(x,\theta)}(u,u)d\nu(\theta)>\delta_0/2\cdot \nu(U)>0$$
	since $\nu$ has full support in $\partial \til{M}$. Note that $u$ is selected
	arbitrarily, so the Hessian
	$$\int_{\partial\til{M}}DdB_{(x,\theta)}(\cdot,\cdot)d\nu(\theta)$$
	is positive definite, and the function
	$$x\mapsto\mathcal{B}_{\nu}(x)$$
	is strictly convex.
	
\end{proof}

\section{Barycentric straightening}\label{sec:barystraight}
We now discuss the barycentric straightening introduced by Lafont and Schmidt \cite{Lafont06b}
(based on the barycenter method originally developed by Besson, Courtois, and Gallot
\cite{Besson95}). We follow the notation in the previous section, and we further denote by
$\Delta^k_s$ the standard spherical k-simplex in the Euclidean space, that is
$$\Delta^k_s=\Big\{(a_1,\ldots ,a_{k+1})\mid a_i\geq 0,
\sum_{i=1}^{k+1}a_i^2=1\Big\}\subseteq \mathbb{R}^{k+1},$$
with the induced Riemannian metric from $\mathbb{R}^{k+1}$, and with ordered vertices
$(e_1,\ldots,e_{k+1})$. Given any singular $k$-simplex $f:\Delta^k_s\rightarrow
\til{M}$, with ordered vertices
$V=(x_1,\ldots,x_{k+1})=\left(f(e_1),\ldots,f(e_{k+1})\right)$, we define the
$k$-straightened simplex
$$st_k(f):\Delta^k_s\rightarrow \til{M}$$
$$st_k(f)(a_1,\ldots ,a_{k+1}):=\op{bar}\left(\sum_{i=1}^{k+1}a_i^2\nu_{x_i}\right)$$
where $\nu_{x_i}=\mu_{x_i}/\norm{\mu_{x_i}}$ is the normalized Patterson-Sullivan measure at
$x_i$. We notice that $st_k(f)$ is determined by the (ordered) vertex set $V$, and we
denote $st_k(f)(\delta)$ by $st_V(\delta)$, for $\delta\in\Delta^k_s$.

\begin{prob} Is barycentric straightening a straightening in the sense of Definition
\ref{def:straightening}?
\end{prob}

The answer to this question is not known in general. Actually it is not difficult to see
the barycentric straightening satisfies $(1)$-$(3)$ of Definition \ref{def:straightening},
and the proof is similar to that of \cite[Property (1)-(3)]{Lafont06b}. Note that to check
$(3)$, the $C^1$ smoothness of top dimensional straightened simplices, we need Lemma \ref{lem:strictly convex} in order to apply the inverse function theorem.

To check $(4)$ that the Jacobian of top dimensional straightened simplices are uniformly
bounded, we estimate as follows (which is also similar to \cite[Property (4)]{Lafont06b}). For
any $\delta=\sum_{i=1}^{n+1}a_ie_i\in \Delta_s^{n}$, $st_n(f)(\delta)$ is defined to be
the unique point where the function
$$x\mapsto \int_{\partial
\til{M}}B(x,\theta)d\left(\sum_{i=1}^{n+1}a_i^2\nu_{x_i}\right)(\theta)$$
is minimized. Hence, by differentiating at that point, we get the 1-form equation
$$\int_{\partial\til{M}}dB_{(st_V(\delta),\theta)}(\cdot)d\left(\sum_{i=1}^{n+1}a_i^2\nu_{x_i}\right)(\theta)\equiv
 0$$
which holds identically on the tangent space $T_{st_V(\delta)}\til{M}$.
Differentiating in a direction $u\in T_\delta(\Delta_s^n)$ in the source, one obtains the $2$-form equation

\begin{align}\label{eqn:2-form}
\begin{split}
&\quad\sum_{i=1}^{n+1}2a_i\langle
u,e_i\rangle_\delta\int_{\partial\til{M}}dB_{(st_V(\delta),\theta)}(v)d\nu_{x_i}(\theta)\\
&+\int_{\partial\til{M}}DdB_{(st_V(\delta),\theta)}(D_\delta(st_V)(u),v)d\left(\sum_{i=1}^{n+1}a_i^2\nu_{x_i}\right)(\theta)\equiv 0
\end{split}
\end{align}
which holds for every $u\in T_\delta(\Delta_n^k)$ and $v\in T_{st_V(\delta)}(\til{M})$.

Now we define two positive semidefinite symmetric endomorphisms $H_\delta$ and $K_\delta$
on $T_{st_V(\delta)}(\til{M})$:
$$\langle
H_\delta(v),v\rangle_{st_V(\delta)}=\int_{\partial\til{M}}dB^2_{(st_V(\delta),\theta)}(v)d\left(\sum_{i=1}^{n+1}a_i^2\nu_{x_i}\right)(\theta)$$
$$\langle
K_\delta(v),v\rangle_{st_V(\delta)}=\int_{\partial\til{M}}DdB_{(st_V(\delta),\theta)}(v,v)d\left(\sum_{i=1}^{n+1}a_i^2\nu_{x_i}\right)(\theta)$$
Note from Lemma \ref{lem:strictly convex} that $K_\delta$ is positive definite. From
Equation (\ref{eqn:2-form}), we obtain, for $u\in T_\delta (\Delta _s^n)$ a unit vector
and $v\in T_{st_V(\delta)}(\til{M})$ arbitrary, the following
\begin{equation}\label{eqn:Q1-bounds-Q2}
\begin{split}
\big| \big\langle K_\delta(D_\delta(st_V)&(u)),v \big\rangle \big| =\abs{-\sum_{i=1}^{n+1}2a_i\langle
u,e_i\rangle_\delta\int_{\partial\til{M}}dB_{(st_V(\delta),\theta)}(v)d\nu_{x_i}(\theta)}
\\
& \leq \left(\sum_{i=1}^{n+1}\langle
u,e_i\rangle_\delta^2\right)^{1/2}\left(\sum_{i=1}^{n+1}4a_i^2\left(\int_{\partial\til{M}}dB_{(st_V(\delta),\theta)}(v)d\nu_{x_i}(\theta)\right)^2\right)^{1/2}\\
&\leq 2\left(\sum_{i=1}^{n+1}a_i^2\int_{\partial\til{M}} dB^2_{(st_V(\delta),\theta)}(v)d\nu_{x_i}(\theta)\int_{\partial\til{M}}1\, d\nu_{x_i}\right)^{1/2}\\
&=2 \inner{H_\delta(v),v }^{1/2}
\end{split}
\end{equation}
via two applications of the Cauchy-Schwartz inequality.

For points $\delta\in\Delta_s^n$ where $st_V$ is nondegenerate, we now pick orthonormal
bases $\{u_1,\ldots ,u_n\}$ on $T_\delta(\Delta_s^n)$, and $\{v_1,\ldots ,v_n\}$ on
$S\subseteq T_{st_V(\delta)}(\til{M})$. We choose these so that $\{v_i\}_{i=1}^n$ are
eigenvectors of $H_\delta$, and $\{u_1,\ldots, u_n\}$ is the resulting basis obtained by
applying the orthonormalization process to the collection of pullback vectors
$\{(K_\delta\circ D_\delta(st_V))^{-1}(v_i)\}_{i=1}^n$. By the choice of bases, the
matrix $(\langle K_\delta\circ D_\delta(st_V)(u_i),v_j\rangle)$ is upper triangular, so
we obtain
\begin{align*}
|\det(K_\delta)\cdot \Jac_\delta(st_V)| &=|\det(\langle K_\delta\circ
D_\delta(st_V)(u_i),v_j\rangle)|\\
& =\Big|\prod_{i=1}^n\langle K_\delta\circ D_\delta(st_V)(u_i),v_i\rangle\Big| \\
& \leq \prod_{i=1}^n 2 \langle H_\delta(v_i),v_i\rangle^{1/2} \\
&=2^n \det(H_\delta)^{1/2}
\end{align*}
where the middle inequality is obtained via Equation (\ref{eqn:Q1-bounds-Q2}).
Hence we get the inequality
$$|\Jac_\delta(st_V)|\leq 2^n\cdot \frac{\det(H_\delta)^{1/2}}{\det(K_\delta)}$$

In order to obtain uniform bounds on the Jacobian, we need a similar ``BCG type'' of
estimate to bound $\det(H_\delta)^{1/2}/\det(K_\delta)$ in the context of geometric rank one spaces. Essentially, what lies behind is the question of eigenvalue matching, that is, with small eigenvalues of
$K_\delta$ can we find enough eigenvalues of $H_\delta$ to cancel? The main difficulty of
the question in general occurs when the space admits large flats, which will result in
too many small eigenvalues of $K_\delta$. However, under the assumption that the manifold has negative
	$(\floor{\frac{n}{4}}+1)$-Ricci curvature, we can make the cancellation work and hence obtain a uniform bound on the
Jacobian. We will establish this in later sections (See Theorem \ref{thm:Jacobian
estimate}). We summarize the discussion above into the following proposition.

\begin{prop}\label{prop:reduction}
	Let $K_\delta$, $H_\delta$ be the two positive semi-definite symmetric forms defined
	as above (note $K_\delta$ is actually positive definite).
	Assume there exists a constant $C$ that only depends on $\til{M}$, with the
	property that
	$$\frac{\det(H_\delta)^{1/2}}{\det(K_\delta)}\leq C$$
	Then the quantity $|\Jac(st_V)(\delta)|$ is universally bounded -- independent of the
	choice of $(n+1)$-tuple of points $V\subset \til{M}$, and of the point $\delta \in
	\Delta_s^n$. Hence the barycentric straightening is a straightening.
\end{prop}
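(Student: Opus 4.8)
The plan is to assemble the pieces already assembled above. The chain of two Cauchy--Schwarz estimates culminating in \eqref{eqn:Q1-bounds-Q2}, together with the upper-triangular matrix computation, produces at every point $\delta\in\Delta_s^n$ where $st_V$ is nondegenerate the pointwise bound
\[
|\Jac_\delta(st_V)|\leq 2^n\cdot\frac{\det(H_\delta)^{1/2}}{\det(K_\delta)}.
\]
So the first step is simply to feed the hypothesis $\det(H_\delta)^{1/2}/\det(K_\delta)\leq C$ into this inequality, obtaining $|\Jac_\delta(st_V)|\leq 2^nC$ at all nondegenerate $\delta$. Since $C$, hence $2^nC$, depends only on $\til{M}$ and not on the vertex tuple $V$ nor on $\delta$, this is exactly the uniformity demanded by condition $(4)$ of Definition \ref{def:straightening}.

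Next I would dispose of the points $\delta$ at which $st_V$ is degenerate. At such $\delta$ the differential $D_\delta(st_V)\colon T_\delta(\Delta_s^n)\to T_{st_V(\delta)}(\til{M})$ fails to be injective, so $\Jac_\delta(st_V)=0$ by the very definition of the Jacobian of a $C^1$ map between Riemannian manifolds of equal dimension; thus the bound $|\Jac_\delta(st_V)|\leq 2^nC$ holds trivially there too. (Equivalently, since $st_n(f)$ is $C^1$ by condition $(3)$, the function $\delta\mapsto\Jac_\delta(st_V)$ is continuous on $\Delta_s^n$, and the bound on the open dense nondegenerate locus passes to its closure.) This establishes condition $(4)$ on all of $\Delta_s^n$ with the single universal constant $2^nC$.

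It then remains to record that conditions $(1)$--$(3)$ of Definition \ref{def:straightening} hold for the barycentric straightening, exactly as in \cite{Lafont06b}: the $\Gamma$-equivariance $(1)$ follows from the $\Gamma$-equivariance of the Patterson--Sullivan measures $\mu_x$ and the equivariance of the barycenter construction under isometries; the chain-map and equivariant chain-homotopy properties $(2)$ follow from the standard straight-line homotopy / acyclic models argument since $\til{M}$ is contractible and $st_k$ fixes vertices; and the $C^1$-regularity $(3)$ of the top straightened simplices is obtained by applying the implicit function theorem to the $1$-form equation $\int_{\partial\til{M}}dB_{(st_V(\delta),\theta)}(\cdot)\,d(\sum_i a_i^2\nu_{x_i})(\theta)\equiv 0$ defining $st_V(\delta)$, whose relevant Hessian $K_\delta$ is invertible by Lemma \ref{lem:strictly convex}. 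With $(1)$--$(4)$ in hand, the barycentric straightening is a straightening, which is the assertion. I do not expect any real obstacle here: this proposition is bookkeeping, isolating the one analytic input it actually needs. That input --- producing the constant $C$, i.e.\ the eigenvalue matching between the small eigenvalues of $K_\delta$ and the eigenvalues of $H_\delta$ in the possible presence of flats --- is precisely the hypothesis, and is the content deferred to Theorem \ref{thm:Jacobian estimate} in the later sections.
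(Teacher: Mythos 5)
Your proposal is correct and follows essentially the same route as the paper: the proposition is indeed just the bookkeeping step of feeding the hypothesis into the previously derived bound $|\Jac_\delta(st_V)|\leq 2^n\det(H_\delta)^{1/2}/\det(K_\delta)$, with conditions $(1)$--$(3)$ handled as in \cite{Lafont06b} (using Lemma \ref{lem:strictly convex} for the $C^1$ property), exactly as the paper does. Your explicit treatment of the degenerate points (where the Jacobian vanishes) is a harmless addition the paper leaves implicit.
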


\section{Estimating the Jacobian}\label{sec:Jacob}
In this section, we establish Theorem \ref{thm:Jacobian estimate} as our key estimate towards both Theorem \ref{thm:main} and \ref{thm:vol-ent}, and consequently we prove Theorem \ref{thm:main}. We start by introducing and analyzing our negative Ricci curvature condition.

\begin{defn}\label{def:k-th trace} For any positive semi-definite linear endomorphism $A:\:V^m\rightarrow V^m$,
and for any $k=1,2,...,m$, we define the $k$-th trace of $A$, denoted by
$\textrm{Tr}_k(A)$, to be
	$$\inf_{V_k\subset V^m}\textrm{Tr}(A|_{V_k}),$$
	where $V_k$ is a $k$-dimensional subspace (not necessarily invariant under $A$) of $V^m$, and $A$ is viewed as a bilinear form when taking restrictions. Equivalently, it is the sum of
	$k$ least eigenvalues of $A$.
\end{defn}
\begin{prop}\label{prop:average} Let $X$ be a compact manifold, and $\mu$ a regular
probability measure on $X$. If $A$ maps $X$ continuously into the set of all $m\times m$
positive semi-definite matrices, then
	$$\textrm{Tr}_k\big(\int_X A(x) d\mu(x)\big)\geq \inf_{x\in X} \textrm{Tr}_k\big(A(x)\big).$$
\end{prop}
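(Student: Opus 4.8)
The plan is to exploit the fact that, by Definition \ref{def:k-th trace}, $\mathrm{Tr}_k$ is an infimum of \emph{linear} functionals of the matrix: for each fixed $k$-dimensional subspace $V_k\subset V^m$ the map $B\mapsto \mathrm{Tr}(B|_{V_k})$ is linear in $B$. Hence $\mathrm{Tr}_k$ is concave on the cone of positive semi-definite matrices, and the asserted inequality is essentially Jensen's inequality for this concave function, followed by the trivial bound $\int_X g\,d\mu\ge \inf_X g$ for a probability measure. I would nonetheless prefer to spell out the short direct argument rather than invoke Jensen, since the direct version is transparent and sidesteps any measure-theoretic bookkeeping.

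First, $\bar A:=\int_X A(x)\,d\mu(x)$ is well defined and is again symmetric positive semi-definite: since $X$ is compact and $A$ is continuous, every entry $A_{ij}$ is bounded and $\mu$-integrable, and a $\mu$-average of positive semi-definite matrices is positive semi-definite. Next fix an arbitrary $k$-dimensional subspace $V_k$ with orthonormal basis $e_1,\dots,e_k$. Commuting the (entrywise) integral with the linear expression $\sum_j\langle\,\cdot\,e_j,e_j\rangle$ gives
\[
\mathrm{Tr}\bigl(\bar A|_{V_k}\bigr)=\sum_{j=1}^{k}\langle \bar A e_j,e_j\rangle=\int_X\sum_{j=1}^{k}\langle A(x)e_j,e_j\rangle\,d\mu(x)=\int_X \mathrm{Tr}\bigl(A(x)|_{V_k}\bigr)\,d\mu(x).
\]

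Now for every $x\in X$ one has $\mathrm{Tr}(A(x)|_{V_k})\ge \mathrm{Tr}_k(A(x))\ge \inf_{x'\in X}\mathrm{Tr}_k(A(x'))$ straight from the definition of the $k$-th trace, so integrating against the probability measure $\mu$ yields $\mathrm{Tr}(\bar A|_{V_k})\ge \inf_{x'\in X}\mathrm{Tr}_k(A(x'))$. Since $V_k$ was arbitrary, taking the infimum over all $k$-dimensional subspaces of $V^m$ gives $\mathrm{Tr}_k(\bar A)\ge \inf_{x'\in X}\mathrm{Tr}_k(A(x'))$, which is exactly the claim. There is no genuine obstacle here; the only point to watch is the order of operations — one must commute the linear functional $B\mapsto\mathrm{Tr}(B|_{V_k})$ with the integral \emph{before} taking the infimum over subspaces, as performing the infimum first would only produce the (trivially true) reverse inequality.
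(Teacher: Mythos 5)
Your proof is correct. The one step that needs care is the exchange $\mathrm{Tr}(\bar A|_{V_k})=\int_X \mathrm{Tr}(A(x)|_{V_k})\,d\mu(x)$ for a \emph{fixed} subspace $V_k$, which is legitimate since this is a finite linear combination of entrywise integrals of the bounded continuous functions $x\mapsto\langle A(x)e_j,e_j\rangle$; after that the pointwise bound $\mathrm{Tr}(A(x)|_{V_k})\geq \mathrm{Tr}_k(A(x))\geq\inf_{x'}\mathrm{Tr}_k(A(x'))$ and the final infimum over $V_k$ close the argument. This differs in execution from the paper's proof: there, the integral $\int_X A\,d\mu$ is first approximated (using regularity of $\mu$, compactness of $X$, and continuity of $\mathrm{Tr}_k$) by a finite convex combination $\sum_i a_i A(x_i)$ up to an $\epsilon$ loss, a minimizing subspace $V_k$ is chosen for that finite sum, and linearity of the restricted trace over the finite sum gives the bound, with $\epsilon\to 0$ at the end. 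Both arguments rest on the same structural fact — $\mathrm{Tr}_k$ is an infimum of linear functionals $B\mapsto\mathrm{Tr}(B|_{V_k})$, hence concave, so the claim is a Jensen-type inequality — but your version avoids the discretization and the appeal to continuity of $\mathrm{Tr}_k$ altogether, and in fact needs neither compactness of $X$ nor regularity of $\mu$ beyond integrability of $A$; the paper's version makes the approximation by convex combinations explicit at the cost of the extra $\epsilon$-bookkeeping. Your closing remark about the order of operations is a reasonable caution, though note that even bounding via $\int_X\mathrm{Tr}_k(A(x))\,d\mu(x)$ first would still give the correct direction; the essential point, which you have, is simply that the infimum over subspaces must be taken outside the integral identity, not used to replace the integrand prematurely on the left-hand side.
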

\begin{proof}
	For any $\epsilon>0$, by continuity of $\textrm{Tr}_k$, there exists an integer $N$,
	$x_1,...,x_N\in X$, and $a_1,...,a_N$ with $\sum_{i=1}^N a_i=1$, such that
	$$\textrm{Tr}_k\Big(\sum_{i=1}^N a_i A(x_i)\Big)< \textrm{Tr}_k\Big(\int_X A(x)
	d\mu(x)\Big)+\epsilon.$$
	On the other hand, there exists $V_k\subset \mathbb R ^m$ such that
	$$\textrm{Tr}_k\Big(\sum_{i=1}^N a_i A(x_i)\Big)=\textrm{Tr}\Big(\sum_{i=1}^N a_i
	A(x_i)\Big)\mid_{V_k},$$
	Furthermore,
	$$\textrm{Tr}\Big(\sum_{i=1}^N a_i A(x_i)\Big)\mid_{V_k}= \sum_{i=1}^N a_i
	\textrm{Tr}\Big(A(x_i)\mid_{V_k}\Big)\geq \inf_{x\in X} \textrm{Tr}_k\big(A(x)\big).$$
	This shows
	$$\textrm{Tr}_k\Big(\int_X A(x) d\mu(x)\Big)+\epsilon> \inf_{x\in X} \textrm{Tr}_k\big(A(x)\big),$$
	where $\epsilon$ can be arbitrarily small. Hence the proposition holds.
\end{proof}

\begin{defn} Given an $n$-dimensional Riemannian manifold $M$ with curvature tensor $R$,
for any $u\in T_xM$, we define a symmetric bilinear form $R_u(v_1,v_2)=-\langle
R(u,v_1)u, v_2\rangle$, where $v_1,v_2\in T_xM$. In particular, if the manifold is
non-positively curved, then $R_u$ defines a positive semi-definite symmetric form on
$T_xM$. Furthermore, we define the $k$-th Ricci curvature in direction $u$ (compare with Definition \ref{def:k-ricci}) as
	$$\textrm{Ric}_k(u)=-\textrm{Tr}_k(R_u).$$
Hence the $n$-th Ricci coincides with the standard Ricci curvature.
\end{defn}

\begin{lem}\label{lem:n/3 condition}
	Let $M$ be a closed manifold with non-positive curvature. Then the following
	conditions are equivalent.
\begin{enumerate}
	\item $\dim(\textrm{null}(R_v))\leq n/4$ for all $v\in T^1M$.
	\item $\forall v\in T^1_xM$, there exists a subspace $F_v\subset T_xM$ of dimension at
	least $3n/4$, such that $\langle v,F_v\rangle=0$ and $R_v(u,u)\geq C_0$ for all $u\in
	F_v$, where $C_0$ is some universal constant that only depends on $(M,g)$.
	\item $\forall v\in T^1_xM$, the $k$-th eigenvalue (in increasing order) of $R_v$ is
	at least $C_0$ when $k>n/4$, where $C_0$ is some universal constant that only depends
	on $(M,g)$.
	\item There exists $\delta>0$ that only depends on $(M,g)$, so that
	$$\inf_{v\in T^1M}\textrm{Tr}_k(R_v)\geq \delta$$
	when $k>n/4$.
	\item The manifold has strictly negative $k$-th Ricci when $k>n/4$. That is,
	$\textrm{Ric}_k(v)< 0$ for all $v\in T^1M$ and $k>n/4$, or equivalently, $Ric_{\floor{\frac{n}{4}}+1}<0$, as in Definition \ref{def:k-ricci}.
\end{enumerate}
\end{lem}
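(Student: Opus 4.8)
The plan is to prove the equivalence by establishing a cycle of implications, with the natural ordering $(1)\Leftrightarrow(2)\Leftrightarrow(3)$ being essentially linear algebra on the positive semi-definite form $R_v$, and $(3)\Leftrightarrow(4)\Leftrightarrow(5)$ being a compactness-plus-definition argument. First I would record the basic observation: since $M$ is non-positively curved, $R_v$ is positive semi-definite on $T_xM$ for every unit $v$, and $v$ itself always lies in $\op{null}(R_v)$ because $R(v,v)v = 0$. Order the eigenvalues $0 \le \lambda_1(v) \le \cdots \le \lambda_n(v)$ of $R_v$; then $\dim\op{null}(R_v)$ is exactly the number of vanishing $\lambda_i$'s. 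This makes $(1)\Leftrightarrow(3)$ almost immediate at the pointwise level: $\dim\op{null}(R_v)\le n/4$ says $\lambda_k(v)>0$ whenever $k > n/4$, and the uniform lower bound $C_0$ in $(3)$ then comes from compactness of $T^1M$ and continuity of $v\mapsto\lambda_k(v)$ (eigenvalues depend continuously on the matrix). For $(2)\Leftrightarrow(3)$: given $(3)$, take $F_v$ to be the span of eigenvectors for $\lambda_k(v)$ with $k>n/4$, intersected with $v^\perp$; since $v$ is a null eigenvector, $v^\perp$ already contains the positive eigenspace, so $\dim F_v \ge \lceil 3n/4\rceil$ and $R_v(u,u)\ge C_0|u|^2$ there by the min-max characterization. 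Conversely $(2)$ forces at least $3n/4$ eigenvalues (counted appropriately, using that the restriction of a quadratic form to a subspace on which it is $\ge C_0$ forces that many eigenvalues to be $\ge$ something definite via Courant–Fischer) to be bounded below, giving $(3)$.

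Next, $(3)\Leftrightarrow(4)$: by Definition \ref{def:k-th trace}, $\op{Tr}_k(R_v)=\sum_{j=1}^k\lambda_j(v)$ is the sum of the $k$ \emph{smallest} eigenvalues. If $(3)$ holds and $k>n/4$, then $\op{Tr}_k(R_v)\ge \lambda_k(v)\ge C_0$ (the other summands are nonnegative), so $(4)$ holds with $\delta = C_0$. Conversely, if $\inf_{v}\op{Tr}_k(R_v)\ge\delta$ for some $k>n/4$, I need to deduce a uniform lower bound on $\lambda_k(v)$ itself — this is where a small amount of care is needed. The cleanest route: note that if $(4)$ holds for $k=\lfloor n/4\rfloor+1$ then the $\lfloor n/4\rfloor+1$ smallest eigenvalues sum to at least $\delta$, so at least one of them is $\ge \delta/(\lfloor n/4\rfloor+1)$, hence $\lambda_k(v)\ge \delta/(\lfloor n/4\rfloor+1)$ for this $k$ — and then monotonicity in $k$ gives the bound for all larger $k$ as well; one also checks $(4)$ for one such $k$ implies it for all larger $k$ since adding more nonnegative eigenvalues only increases the partial sum. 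This closes $(3)\Leftrightarrow(4)$.

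Finally, $(4)\Leftrightarrow(5)$ is unwinding notation. By the Definition preceding the lemma, $\op{Ric}_k(v)=-\op{Tr}_k(R_v)$, so $\op{Ric}_k(v)<0$ for all unit $v$ is the statement $\op{Tr}_k(R_v)>0$ for all $v$, and by compactness of $T^1M$ and continuity of $\op{Tr}_k$ (Proposition \ref{prop:average}'s continuity input, or directly) this strict positivity upgrades to the uniform bound $\inf_v \op{Tr}_k(R_v)\ge\delta>0$ of $(4)$; the reverse is trivial. The identification $\op{Ric}_k(v)<0 \Leftrightarrow \op{Ric}_{\lfloor n/4\rfloor+1}<0$ in the sense of Definition \ref{def:k-ricci} requires matching the two sign conventions: Definition \ref{def:k-ricci} uses a $\sup$ over $k$-planes of $\op{Tr} R(u,\cdot,v,\cdot)|_V$, whereas Definition \ref{def:k-th trace} uses $\inf$ of $\op{Tr}(A|_{V_k})$ for $A = R_v = -R(u,\cdot,u,\cdot)$; the sign flip turns the $\inf$ into a $\sup$, so $\op{Ric}_k(u) = \sup_{V}\op{Tr} R(u,\cdot,u,\cdot)|_V = \op{Ric}_k(u,u)$, consistent with both definitions, and taking the sup over $u\in T^1M$ matches. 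I should also remark (as the text's aside does) that the ``standard Ricci'' is recovered at $k=n$.

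The main obstacle — really the only non-formal point — is the converse direction $(4)\Rightarrow(3)$, i.e.\ deducing a uniform \emph{individual} eigenvalue bound from a bound on a \emph{sum} of eigenvalues, across all $k>n/4$ and all $v$ simultaneously. The pigeonhole argument above (one of the $\lfloor n/4\rfloor+1$ smallest eigenvalues must be at least the average) handles it, but one must be slightly careful that the index $k$ at which we get the bound is the \emph{largest} of the small eigenvalues, and then invoke monotonicity $\lambda_{k}(v)\le\lambda_{k+1}(v)$ to propagate upward; everything else is Courant–Fischer and a compactness argument on $T^1M$.
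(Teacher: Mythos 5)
Your proposal is correct and uses essentially the same ingredients as the paper's proof: ordering the eigenvalues of the positive semi-definite form $R_v$, taking $F_v$ to be the span of the top eigenvectors (automatically orthogonal to $v$ since $v$ is null), a pigeonhole/averaging step to pass from the trace bound to an individual eigenvalue bound, and compactness of $T^1M$ with continuity of eigenvalues to get uniform constants. The only difference is organizational: the paper runs the single cycle $(4)\Rightarrow(3)\Rightarrow(2)\Rightarrow(1)\Rightarrow(4)$ (using the cheap implication $(2)\Rightarrow(1)$ instead of your Courant--Fischer argument for $(2)\Rightarrow(3)$), while you prove pairwise equivalences, which is equally valid.
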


\begin{proof}
	It is easy to see, by compactness of $M$, $(4)$ and $(5)$ are equivalent. We show by
	the loop of implications
	$(4)\Rightarrow(3)\Rightarrow(2)\Rightarrow(1)\Rightarrow(4)$.
	
	\noi $(4)\Rightarrow(3)$: Take $V_k$ to be the span of the first $k$ eigenvectors of
	$R_v$, with associated eigenvalues $\lambda_1\leq \lambda_2\leq...\leq \lambda_k$. By
	Definition \ref{def:k-th trace},
	$\lambda_1+\lambda_2+...+\lambda_k=\textrm{Tr}(R_v|_{V_k})\geq \textrm{Tr}_k(R_v)
	\geq \delta$, for some constant $\delta$ that only depends on $(M,g)$. So
	$\lambda_k\geq \delta/k \geq \delta/n$, with constant $\delta/n$ only depending on
	$(M,g)$.
	
	\noi $(3)\Rightarrow(2)$: Take $F_v$ to be the span of the last $n-k+1$ eigenvectors (the
	$k,k+1,...,n$-th) of $R_v$, where $k$ is selected to be $\floor{\frac{n}{4}}+1$. Then
	$F_v$ is orthogonal to $v$ as $v$ is always in the null space of $R_v$ which
	corresponds to the first eigenvalue $0$. Note that $R_v(u)\geq \lambda_k\geq C_0$ for
	all $u\in F_v$, where $\lambda_k$ is the $k$-th eigenvalue of $R_v$. Also note that
	the dimension of $F_v$ is $n-k+1=n-\floor{\frac{n}{4}}$, which is at least $3n/4$.
	
	\noi $(2)\Rightarrow(1)$: For any $v\in T^1M$, by the property of $F_v$, $F_v\cap
	\textrm{null}(R_v)=0$. Therefore, $\dim(\textrm{null}(R_v))+\dim(F_v)\leq n$, hence
	$\dim(\textrm{null}(R_v))\leq n/4$.
	
	\noi $(1)\Rightarrow(4)$: We set $l=\floor{\frac{n}{4}}+1$, and denote $\lambda_k(v)$ be
	the $k$-th eigenvalue of $R_v$. By $(1)$, $\lambda_l(v)>0$ for all $v\in T^1M$. Since
	$\lambda_l(v)$ is continuous on $v$, and $T^1M$ is compact, there exists a universal
	constant $\delta>0$ such that $\lambda_l(v)\geq \delta$, hence for any $k>n/4$, we
	have
	$$\inf_{v\in T^1M}\textrm{Tr}_k(R_v)\geq \lambda_k(v)\geq \lambda_l(v)\geq \delta$$
\end{proof}

\begin{defn}
	We say a nonpositively curved manifold have \emph{negative
	$(\floor{\frac{n}{4}}+1)$-Ricci curvature} if it satisfies any of the five
	conditions above.
\end{defn}

In order to obtain an estimate in Proposition \ref{prop:reduction}, we will need to compare the eigenvalues of $H_\delta$ with $K_\delta$. As an integrand of $K_\delta$, the Hessian of Busemann functions are closely related to the sectional curvatures via the Jacobi equation. We recall that if $X$ is a simply connected, nonpositively curved manifold, then the Busemann functions are of class $C^2$ (see \cite{Heintze77}), so that the Hessian $DdB$ is a
continuous symmetric form on $X$. And by convexity of Busemann functions, they are positive semi-definite
everywhere. Moreover, we can bound them below by the sectional curvatures as follows.

\begin{thm}\label{thm:key estimate} Let $\til{M}$ be the universal cover of some closed
non-positively curved manifold $M$, $x\in \til{M}$ and $\theta\in \partial \til{M}$. If $Y_0\in T^1_x
\til{M}$ is any unit vector in the horocycle direction, that is, $Y_0\perp \gamma_{x\theta}'(0)$,
where $\gamma_{x\theta}(t)$ is the geodesic ray connecting $x$ and $\theta$, then there
exists a constant $C$ that depends on the norm of up to the second derivative of
curvature ($\norm{R}$, $\norm{\nabla R}$, and $\norm{\nabla^2 R}$), such that
	$$DdB_{(x,\theta)}(Y_0,Y_0)\geq C \Big(-\big\langle
	R(\gamma_{x\theta}'(0),Y_0)\gamma_{x\theta}'(0),Y_0\big\rangle\Big)^{3/2}$$
\end{thm}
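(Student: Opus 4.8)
The plan is to realize $DdB_{(x,\theta)}(Y_0,Y_0)$ as the index-form energy of a stable Jacobi field, and then to bound that energy from below by a pointwise curvature estimate whose essential feature is that it simultaneously uses nonpositivity of the curvature and a bound on $\nabla^{2}R$.

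\smallskip
\emph{Setup.} Write $\gamma=\gamma_{x\theta}$ and let $J$ be the unique Jacobi field along $\gamma|_{[0,\infty)}$ with $J(0)=Y_0$ that stays bounded as $t\to\infty$. Since $Y_0\perp\gamma'(0)$ one gets $J(t)\perp\gamma'(t)$ for all $t$; since $|J|^{2}$ is convex and bounded on $[0,\infty)$ it is nonincreasing there, whence $|J(t)|\le 1$, and the standard bound $\|DdB\|\le\|R\|^{1/2}$ on the Busemann Hessian gives in addition $|J'(t)|\le\|R\|^{1/2}$ and $|J''(t)|\le\|R\|$ on $[0,\infty)$. Standard Busemann-function theory ($C^{2}$-regularity \cite{Heintze77}; stable Jacobi tensors, cf.\ \cite{Knieper98}) gives $DdB_{(x,\theta)}(Y_0,Y_0)=-\langle J'(0),Y_0\rangle$; combined with $\tfrac{d}{dt}\langle J',J\rangle=|J'|^{2}+R_{\gamma'}(J,J)$ (the Jacobi equation, in the form-notation $R_v$ of the paper) and $\langle J'(t),J(t)\rangle=\tfrac12(|J|^{2})'(t)\to 0$, this gives the index-form identity
\[ DdB_{(x,\theta)}(Y_0,Y_0)=\int_0^{\infty}\big(|J'(t)|^{2}+R_{\gamma'(t)}(J(t),J(t))\big)\,dt\ \ge\ \int_0^{\infty}R_{\gamma'(t)}(J(t),J(t))\,dt, \]
with both integrands $\ge 0$ because $R_v$ is positive semidefinite in nonpositive curvature.

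\smallskip
\emph{The key estimate --- and the main obstacle.} Put $g(t):=R_{\gamma'(t)}(J(t),J(t))\ge 0$, so $g(0)=R_{\gamma'(0)}(Y_0,Y_0)=-\langle R(\gamma_{x\theta}'(0),Y_0)\gamma_{x\theta}'(0),Y_0\rangle=:\kappa_0$ is precisely the quantity in the theorem. The naive bound $|g'(0)|\le\|\nabla R\|+2\|R\|^{3/2}$ only keeps $g\gtrsim\kappa_0$ on an interval of length $\sim\kappa_0$, which yields merely $DdB_{(x,\theta)}(Y_0,Y_0)\gtrsim\kappa_0^{2}$ --- too weak. The point, and the crux of the proof, is to use in addition that $g$ is \emph{nonnegative everywhere} (nonpositive curvature) together with a second-order bound: differentiating $g$ twice along $\gamma$ gives a sum of terms pairing $\nabla^{2}R,\nabla R,R$ against $J,J',J''$, so with the boundedness of $J,J',J''$ above one gets $\sup_{[0,\infty)}|g''|\le C_4$ for a constant $C_4$ depending only on $\|R\|,\|\nabla R\|,\|\nabla^{2}R\|$. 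Now an elementary one-sided lemma: a nonnegative $C^{2}$ function $g$ on $[0,\infty)$ with $\|g''\|_{\infty}\le C_4$ has $g'(0)\ge-\sqrt{2C_4\,g(0)}$, since $0\le g(t)\le g(0)+g'(0)t+\tfrac{C_4}{2}t^{2}$ for all $t\ge 0$ and a quadratic in $t$ with positive leading term, nonnegative on $[0,\infty)$, with vertex at $-g'(0)/C_4\ge 0$, must have nonpositive discriminant. Consequently, for $t\ge 0$,
\[ g(t)\ \ge\ g(0)+g'(0)t-\tfrac{C_4}{2}t^{2}\ \ge\ \kappa_0-\sqrt{2C_4\kappa_0}\,t-\tfrac{C_4}{2}t^{2}\ \ge\ \tfrac{\kappa_0}{2}\ \text{ on }[0,\tau],\quad\tau:=\tfrac13\sqrt{\kappa_0/C_4}. \]
The decisive gain is that this good interval has length of order $\sqrt{\kappa_0}$, not $\kappa_0$.

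\smallskip
\emph{Conclusion.} Combining the two displays,
\[ DdB_{(x,\theta)}(Y_0,Y_0)\ \ge\ \int_0^{\tau}g(t)\,dt\ \ge\ \frac{\kappa_0}{2}\,\tau\ =\ \frac{1}{6\sqrt{C_4}}\,\kappa_0^{3/2}, \]
which is the assertion, with constant $C=\tfrac16 C_4^{-1/2}$ depending only on $\|R\|,\|\nabla R\|,\|\nabla^{2}R\|$. I expect the second step to be the only genuinely non-formal one: one has to spend the $\|\nabla^{2}R\|$ bound, in combination with $g\ge 0$, to enlarge the scale on which $g\gtrsim\kappa_0$ from $\kappa_0$ to $\sqrt{\kappa_0}$, and that is exactly what promotes the exponent from $2$ to $3/2$. (One could instead argue through the Riccati equation for $t\mapsto DdB_{(\gamma(t),\theta)}$, applying the same lemma to $t\mapsto R_{\gamma'(t)}(Y(t),Y(t))$ with $Y$ parallel along $\gamma$; the index-form route above has the advantage of avoiding any sign bookkeeping.)
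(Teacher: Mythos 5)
Your proof is correct and follows essentially the same route as the paper: the index-form identity $DdB_{(x,\theta)}(Y_0,Y_0)=\int_0^\infty(\,|J'|^2+R_{\gamma'}(J,J)\,)\,dt$ for the stable Jacobi field, a uniform bound on the second derivative of $t\mapsto R_{\gamma'(t)}(J(t),J(t))$ in terms of $\|R\|,\|\nabla R\|,\|\nabla^2 R\|$, and an elementary lemma for nonnegative functions with bounded second derivative that yields the exponent $3/2$ (the paper's Lemma \ref{lem:cal}); your shortcut of bounding $|J'|$ directly by the comparison bound on the Busemann Hessian via $J'=-(\mathrm{Hess}\,B)\,J$ for stable fields is standard and simply replaces the paper's longer integration argument for $\|Y'\|$. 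The only blemish is numerical: with $\tau=\tfrac13\sqrt{\kappa_0/C_4}$ one gets $g\geq\bigl(1-\tfrac{\sqrt2}{3}-\tfrac1{18}\bigr)\kappa_0\approx 0.47\,\kappa_0$ on $[0,\tau]$, just short of $\kappa_0/2$, so shrink $\tau$ (e.g.\ to $\tfrac14\sqrt{\kappa_0/C_4}$), which only changes the final constant $C$.
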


\begin{proof}
	We extend $Y_0$ along the geodesic $\gamma_{x\theta}(t)$ to $Y(t)$, the unique stable
	Jacobi field with $Y(0)=Y_0$. Then the Hessian $DdB_{(x,\theta)}(Y_0,Y_0)$ is the same as
	the second fundamental form in direction $Y_0$ of the horosphere determined by $x$ and
	$\theta$, which is further equal to $-\langle Y(0), Y'(0)\rangle$ (see \cite{Heintze77}). We
	now take the second covariant derivative along the geodesic ray of $\langle Y(t),
	Y(t)\rangle$,
	\begin{align*}
	\big\langle Y(t),Y(t)\big\rangle''&=2\Big(\big\langle Y'(t),Y'(t)\big\rangle+\big\langle
	Y(t),Y''(t)\big\rangle\Big)\\
	&=2\Big(\norm{Y'(t)}^2+R_{\gamma_{x\theta}'(t)}\big(Y(t)\big)\Big)
	\end{align*}
	Integrating along the geodesic ray, we obtain
	$$2\Big(\lim_{t\rightarrow \infty}\big\langle Y(t),Y'(t)\big\rangle-\big\langle
	Y(0),Y'(0)\big\rangle\Big)=2\int_0^\infty\Big(\norm{Y'(t)}^2+R_{\gamma_{x\theta}'(t)}\big(Y(t)\big)\Big)dt$$
	Since $Y(t)$ is stable, $\norm{Y(t)}^2$ converges to a constant, so its derivative
	$2\langle Y(t),Y'(t)\rangle$ goes to $0$. Therefore, from the above equality, we
	obtain further that
\begin{align*}
	DdB_{(x,\theta)}(Y_0,Y_0)=-\big\langle
	Y(0),Y'(0)\big\rangle&=\int_0^\infty\Big(\norm{Y'(t)}^2+R_{\gamma_{x\theta}'(t)}\big(Y(t)\big)\Big)dt\\
    &\geq\int_0^\infty R_{\gamma_{x\theta}'(t)}\big(Y(t)\big)dt
\end{align*}
	To finish the proof of the theorem, we need the following lemma from calculus.
	
	\begin{lem}\label{lem:cal}Let $F$ be a $C^2$ function on $[0,\infty)$. If $F\geq 0$,
	and $F''$ is bounded above by $L\geq 0$ then there is a constant $C>0$ that depends on $L$ such
	that
		$$\int_0^\infty F(t)dt\geq C\cdot F(0)^{3/2}$$
	\end{lem}
	\begin{proof} First, we show $F(t)\geq \big(\sqrt{F(0)}-L't\big)^2$ on the interval
	$\big[0,\sqrt{F(0)}/L'\big]$, for some $L'$ depending only on $L$. If we denote $G(t)=F(t)-
	\big(\sqrt{F(0)}-L't\big)^2$, then $G(0)=0$, and $G\big(\sqrt{F(0)}/L'\big)=F\big(\sqrt{F(0)}/L'\big)\geq 0$.
	Moreover, $G''(t)=F''(t)-2L'^2$, so if we choose $L'>\sqrt{L/2}$, then $G''(t)<0$.
	Therefore $G$ is concave hence $G\geq 0$ on $\big[0,\sqrt{F(0)}/L'\big]$. Using this result
	and noting also that $F\geq 0$, we can estimate the integral
		$$\int_0^\infty F(t)dt\geq \int_0^{\sqrt{F(0)}/L'}
		\big(\sqrt{F(0)}-L't\big)^2dt=\frac{F(0)^{3/2}}{3L'}=C\cdot F(0)^{3/2}$$
		where $C$ is some constant depending on $L$.
	\end{proof}
	
	We continue with the proof of Theorem \ref{thm:key estimate}. If we can apply Lemma \ref{lem:cal} to the function $R_{\gamma_{x\theta}'(t)}(Y(t))$, then we get the
	inequality of the theorem. So it suffices to show that the second derivative of
	$R_{\gamma_{x\theta}'(t)}(Y(t))$ is bounded above. Therefore we compute and estimate (by
simply writing $\gamma'$, $Y$, and $Y'$ for $\gamma_{x\theta}'(t)$, $Y(t)$, and $Y'(t)$):
\begin{align*}
\Big|\big(R_{\gamma_{x\theta}'(t)}(Y(t))\big)''\Big|&=\Big|\big(\big\langle(\nabla
	R)_{\gamma'}(\gamma',Y)\gamma',Y\big\rangle+2\big\langle R(\gamma',Y)\gamma',Y'\big\rangle\big)'\Big|\\
	&=\Big|\big\langle(\nabla^2
	R)_{(\gamma',\gamma')}(\gamma',Y)\gamma',Y\big\rangle+4\big\langle(\nabla
	R)_{\gamma'}(\gamma',Y)\gamma',Y'\big\rangle\\
	&\quad+2\big\langle R(\gamma',Y')\gamma',Y'\big\rangle+2\big\langle R(\gamma',Y)\gamma', Y''\big\rangle\Big|\\
	&\leq C(\norm{Y}^2+\norm{Y'}^2)
\end{align*}
	where the last inequality uses the Jacobi equation $Y''=-R(\gamma',Y)\gamma'$, and $C$ is a constant depending on $\norm{R}$, $\norm{\nabla R}$ and $\norm{\nabla^2 R}$. We also note that since $Y$ is the stable Jacobi field,
	$\norm{Y(t)}$ is non-increasing along the geodesic ray and therefore $\norm{Y(t)}\leq
	\norm{Y(0)}=1$. Thus we only need to show $\norm{Y'}^2$ is bounded. However, we have
\begin{align*}
	|\langle Y'(t),Y'(t)\rangle'|=|2\langle Y'', Y'\rangle|&=2|\langle
	R(\gamma,Y)\gamma',Y'\rangle|\\
	&\leq C_1\sqrt{\langle R(\gamma',Y)\gamma',Y\rangle \langle R(\gamma',Y')\gamma',
	Y'\rangle}\\
	&\leq C_2\sqrt{\langle -R(\gamma',Y)\gamma',Y\rangle}\norm{Y'}\\
	&\leq C_2(\langle -R(\gamma',Y)\gamma',Y\rangle+\norm{Y'}^2)\\
	&=C_2\langle Y,Y'\rangle'
\end{align*}
	where the first inequality is due to the fact that $-R$ is positive semi-definite,
	the second uses the bound of $\norm{R}$, and the third inequality is the Cauchy-Schwarz
	inequality. Here $C_1$ is some universal constant, and $C_2$ is a constant depending
	on $\norm{R}$. Integrating the above inequality, we obtain, for any $0<t<s<\infty$,
	$$|\langle Y'(t),Y'(t)\rangle-\langle Y'(s),Y'(s)\rangle|\leq C_2|\langle Y(t),Y'(t)
	\rangle-\langle Y(s),Y'(s)\rangle|$$
	As we see earlier in the proof, $\langle Y(s),Y'(s)\rangle$ increases to $0$ as $s$
	approaches to $\infty$. Note that
	$$\int_0^\infty \norm{Y'(t)}^2dt\leq \int_0^\infty
	\norm{Y'(t)}^2+R_{\gamma_{x\theta}'(t)}(Y(t))dt<\infty$$
	so $\langle Y'(s),Y'(s) \rangle$ also goes to $0$ as $s$ approaches to $\infty$. So by
	taking $s\rightarrow \infty$, we have
	$$\norm{Y'(t)}^2\leq C_2 |\langle Y(t),Y'(t)\rangle |\leq -C_2\langle
	Y(0),Y'(0)\rangle=C_2 DdB_{(x,\theta)}(Y_0,Y_0)$$
	But by the comparison theorem the Hessian $DdB_{(x,\theta)}(Y_0,Y_0)$ is bounded above by
	some constant depending on $\norm{R}$. This shows $\norm{Y'}$ is bounded by some constant
	on $\norm{R}$, hence the second derivative of $R_{\gamma_{x\theta}'(t)}(Y(t))$ is
	bounded by some constant on $\norm{R}$, $\norm{\nabla R}$, and $\norm{\nabla^2 R}$, and in
	view of Lemma \ref{lem:cal}, we obtain the inequality of the theorem.
	
\end{proof}

\begin{cor}\label{cor:n/4 small} Under the assumption of Theorem \ref{thm:key estimate},
if $M$ has negative $(\floor{\frac{n}{4}}+1)$-Ricci curvature, then
	$$\textrm{Tr}_{k+1}(DdB_{(x,\theta)}(\cdot,\cdot))\geq C_0$$
	where $k=\floor{\frac{n}{4}}$ and $C_0$ depends on the negative $(\floor{\frac{n}{4}}+1)$-Ricci constant in Lemma \ref{lem:n/3 condition}, in particular it depends on $(M,g)$.
\end{cor}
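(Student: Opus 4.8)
The plan is to feed the pointwise lower bound of Theorem \ref{thm:key estimate} into the $(k+1)$-th trace and then use the negative $(\floor{\frac{n}{4}}+1)$-Ricci hypothesis in the guise of condition (2) of Lemma \ref{lem:n/3 condition}. Fix $x\in\til M$ and $\theta\in\partial\til M$, write $\gamma=\gamma_{x\theta}$ and $v=\gamma'(0)\in T^1_x\til M$. By Lemma \ref{lem:n/3 condition}(2) there is a subspace $F_v\subset T_x\til M$ with $\dim F_v\geq 3n/4$, $F_v\perp v$, and $R_v(u,u)=-\langle R(v,u)v,u\rangle\geq C_0'$ for every unit $u\in F_v$, where $C_0'$ depends only on $(M,g)$. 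Since every $u\in F_v$ is orthogonal to $\gamma'(0)$, i.e. lies in the horocycle direction, Theorem \ref{thm:key estimate} applies and gives
$$DdB_{(x,\theta)}(u,u)\geq C\,\big(R_v(u,u)\big)^{3/2}\geq C\,(C_0')^{3/2}=:c_0$$
for all unit $u\in F_v$, with $c_0$ depending only on $\til M$ (through $\norm R,\norm{\nabla R},\norm{\nabla^2 R}$ and $C_0'$).

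Next I would convert this into a bound on $\mathrm{Tr}_{k+1}$. Recall $\mathrm{Tr}_{k+1}(DdB_{(x,\theta)})$ is the sum of the $k+1$ smallest eigenvalues of the positive semidefinite form $DdB_{(x,\theta)}$, equivalently the infimum of $\mathrm{Tr}(DdB_{(x,\theta)}|_W)$ over $(k+1)$-dimensional subspaces $W$. Fix any such $W$ achieving (or nearly achieving) the infimum. Since $\dim F_v\geq 3n/4=n-k$ and $\dim W=k+1$, we have $\dim(W\cap F_v)\geq (n-k)+(k+1)-n=1$, so $W\cap F_v$ contains a unit vector $u_0$. Extend $u_0$ to an orthonormal basis $\{u_0,u_1,\dots,u_k\}$ of $W$; then, using that $DdB_{(x,\theta)}$ is positive semidefinite so the remaining diagonal terms are $\geq 0$,
$$\mathrm{Tr}\big(DdB_{(x,\theta)}|_W\big)=\sum_{i=0}^{k}DdB_{(x,\theta)}(u_i,u_i)\geq DdB_{(x,\theta)}(u_0,u_0)\geq c_0.$$
Taking the infimum over $W$ yields $\mathrm{Tr}_{k+1}(DdB_{(x,\theta)})\geq c_0$, which is the desired estimate with $C_0=c_0$.

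I do not expect any serious obstacle here: the corollary is essentially a bookkeeping combination of Theorem \ref{thm:key estimate} with the dimension-counting in Lemma \ref{lem:n/3 condition}. The one point that needs care is the direction of inequalities — ensuring that the constant $C_0$ in the statement genuinely depends only on $(M,g)$ and not on $x$ or $\theta$. This is fine because $C$ in Theorem \ref{thm:key estimate} depends only on $\norm R,\norm{\nabla R},\norm{\nabla^2 R}$, which are uniformly bounded on the compact manifold $M$ (and hence on $\til M$ by equivariance), and $C_0'$ from Lemma \ref{lem:n/3 condition}(2) is by construction a constant depending only on $(M,g)$; so $c_0=C(C_0')^{3/2}$ is uniform. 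A minor subtlety is that the infimum defining $\mathrm{Tr}_{k+1}$ may not be attained, but this is handled in the standard way by working with a subspace $W$ for which $\mathrm{Tr}(DdB_{(x,\theta)}|_W)<\mathrm{Tr}_{k+1}(DdB_{(x,\theta)})+\epsilon$ and letting $\epsilon\to 0$; the dimension count $\dim(W\cap F_v)\geq 1$ holds for every such $W$.
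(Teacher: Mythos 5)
Your proof is correct, but it takes a genuinely different route from the paper's. The paper diagonalizes $DdB_{(x,\theta)}$, sums the estimate of Theorem \ref{thm:key estimate} over the $k+1$ least eigendirections, uses H\"older's inequality to pull the sum inside the power $3/2$, and then invokes the trace form of the curvature hypothesis, $\mathrm{Tr}_{k+1}(R_{v_{x\theta}})\geq C''$ (conditions (4)/(5) of Lemma \ref{lem:n/3 condition}). You instead use condition (2): the subspace $F_v$ on which $R_v\geq C_0'$ uniformly, combined with the dimension count $\dim(W\cap F_v)\geq (k+1)+(n-k)-n=1$ to exhibit, inside any $(k+1)$-plane $W$, a single unit vector $u_0$ with $DdB_{(x,\theta)}(u_0,u_0)\geq C(C_0')^{3/2}$, discarding the remaining (nonnegative) diagonal terms. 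Both arguments are valid and give constants depending only on $(M,g)$. Your version has two small advantages: it avoids H\"older, and, more substantively, it only applies Theorem \ref{thm:key estimate} to vectors in $F_v$, which are automatically orthogonal to $\gamma_{x\theta}'(0)$, whereas the paper's eigenvectors $e_i$ need not be horocyclic, so its proof implicitly relies on the decomposition $v=v_1+v_2$ (the computation carried out explicitly in Lemma \ref{lem:Busemann estimate}) to extend the key estimate to such directions; your route sidesteps that. One pedantic point: $\dim F_v\geq 3n/4$ yields $\dim F_v\geq n-k$ only because dimensions are integers ($\lceil 3n/4\rceil=n-\floor{\frac n4}$); the equality $3n/4=n-k$ you wrote holds only when $4\mid n$, so phrase the count via the ceiling (or note that the construction in Lemma \ref{lem:n/3 condition} produces $\dim F_v=n-\floor{\frac n4}$ exactly).
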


\begin{proof} We choose an orthonormal frame $e_1, e_2,..., e_{k+1}$ of the $k+1$ least
eigenvectors of $DdB_{(x,\theta)}(\cdot,\cdot)$, so that
$$\textrm{Tr}_{k+1}(DdB_{(x,\theta)}(\cdot,\cdot))=\sum_{i=1}^{k+1}DdB_{(x,\theta)}(e_i,e_i).$$
 According to Theorem \ref{thm:key estimate} and H\"older's inequality, we obtain
	$$\sum_{i=1}^{k+1}DdB_{(x,\theta)}(e_i,e_i)\geq C\sum_{i=1}^{k+1}
	R_{v_{x\theta}}(e_i,e_i)^{3/2} \geq C'\Big(\sum_{i=1}^{k+1}
	R_{v_{x\theta}}(e_i,e_i)\Big)^{3/2}$$
	The negative $(\floor{\frac{n}{4}}+1)$-Ricci curvature condition implies (see Lemma \ref{lem:n/3 condition})
	$$\sum_{i=1}^{k+1} R_{v_{x\theta}}(e_i,e_i)\geq Tr_{k+1}(R_{v_{x\theta}})\geq C''$$
	for some constant $C''$ depending on $(M,g)$. Combining the above two inequalities
    establishes this corollary.
\end{proof}

 As a first step towards Theorem \ref{thm:Jacobian estimate}, we use the result of Theorem \ref{thm:key estimate} to obtain the following lemma, which compares pointwise the integrands of $H_\delta$ and $K_\delta$ -- the two positive semi-definite forms described in Section \ref{sec:barystraight}. We remark that the power $2/3$ in the following lemma (which actually traces back to lemma \ref{lem:cal}) directly leads to our imposed ``$n/4$'' condition. If this power can be improved to be closer to $1$, then the resulting $k$-Ricci condition could also be slightly weakened, but is still limited to an ``$n/3$'' condition.

\begin{lem}\label{lem:Busemann estimate}
	Suppose $M$ is a closed non-positively curved $n$-manifold with negative
	$(\floor{\frac{n}{4}}+1)$-Ricci curvature, and $\til{M}$ is its Riemannian
	universal cover. Let $x\in \til{M}$, and $\theta\in \partial \til{M}$. Then there
	is a constant $C$ that depends on $(M,g)$, such that for all $v\in T^1_xM$ and all
	$u\in F_v$ (where $F_v$ satisfies $(2)$ of Lemma \ref{lem:n/3 condition}), we have $$dB^2_{(x,\theta)}(u,u)\leq C\Big(DdB_{(x,\theta)}(v,v)\Big)^{2/3}$$
\end{lem}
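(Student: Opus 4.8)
The plan is to run both sides of the inequality through the single curvature scalar $R_v(w,w)$, where $w:=\gamma_{x\theta}'(0)$ is the unit tangent at $x$ of the geodesic ray toward $\theta$. I will use throughout two standard facts: $\nabla B_{(x,\theta)}(x)=-w$, so $dB_{(x,\theta)}(\cdot)=-\langle w,\cdot\rangle$ and $dB^2_{(x,\theta)}(u,u)=\langle w,u\rangle^2$; and, since $\nabla B_{(x,\theta)}$ is the velocity field of the geodesics toward $\theta$, the gradient direction sits in the kernel of the Hessian, $DdB_{(x,\theta)}(w,\cdot)=0$, while trivially $R_w(w,\cdot)=0=R_v(v,\cdot)$. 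If $v=\pm w$ there is nothing to prove: then $F_v\perp v=\perp w$ forces $dB_{(x,\theta)}(u)=0$ while $DdB_{(x,\theta)}(v,v)=0$. Otherwise I write $v=\cos\phi\,w+\sin\phi\,v^\perp$ with $v^\perp\perp w$ a unit vector and $\sin\phi>0$.

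First, a lower bound on the Hessian. Using $DdB_{(x,\theta)}(w,\cdot)=0$ and then Theorem~\ref{thm:key estimate} applied to the horocyclic vector $v^\perp$,
\[
DdB_{(x,\theta)}(v,v)=\sin^2\phi\;DdB_{(x,\theta)}(v^\perp,v^\perp)\ \geq\ C_1\,\sin^2\phi\,\big(R_w(v^\perp,v^\perp)\big)^{3/2},
\]
where $C_1$ is the constant of Theorem~\ref{thm:key estimate}. The symmetries of the curvature tensor together with $R_w(w,\cdot)=0=R_v(v,\cdot)$ give the identity $R_v(w,w)=R_w(v,v)=\sin^2\phi\,R_w(v^\perp,v^\perp)$, hence $\big(R_v(w,w)\big)^{3/2}=\sin^3\phi\,\big(R_w(v^\perp,v^\perp)\big)^{3/2}$. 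Substituting and using $\sin\phi\le 1$ yields
\[
DdB_{(x,\theta)}(v,v)\ \geq\ C_1\,\big(R_v(w,w)\big)^{3/2}.
\]

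Next, an upper bound on $dB^2$. I take $F_v$ to be the span of the eigenvectors of $R_v$ with the $n-\floor{\frac n4}$ largest eigenvalues — the subspace produced in the $(3)\Rightarrow(2)$ step of Lemma~\ref{lem:n/3 condition}, whose associated eigenvalues are all $\geq C_0$. Since $R_v$ is positive semi-definite, expanding $w$ in an eigenbasis gives $R_v(w,w)\geq C_0\,\norm{\mathrm{proj}_{F_v}w}^2$, so for every unit $u\in F_v$,
\[
dB^2_{(x,\theta)}(u,u)=\langle w,u\rangle^2=\langle \mathrm{proj}_{F_v}w,\,u\rangle^2\ \leq\ \norm{\mathrm{proj}_{F_v}w}^2\ \leq\ R_v(w,w)/C_0.
\]
Chaining the two displays, $dB^2_{(x,\theta)}(u,u)\le R_v(w,w)/C_0\le C_1^{-2/3}C_0^{-1}\big(DdB_{(x,\theta)}(v,v)\big)^{2/3}$, which is the assertion with $C=C_1^{-2/3}C_0^{-1}$; both $C_1$ (depending on $\norm R,\norm{\nabla R},\norm{\nabla^2 R}$) and $C_0$ (the negative $(\floor{\frac n4}+1)$-Ricci constant) depend only on $(M,g)$.

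The conceptual content is concentrated in the first bound: recognizing that $w$ lies in the kernels of both $DdB_{(x,\theta)}$ and $R_w$, that $\mathrm{span}(w,v)=\mathrm{span}(w,v^\perp)$, and that $R_v(w,w)=R_w(v,v)$ — this is exactly what promotes the horocyclic estimate of Theorem~\ref{thm:key estimate} to a statement about the arbitrary direction $v$. The one place requiring care is the second bound, where the argument genuinely uses that $F_v$ is spanned by eigenvectors of $R_v$, so that $\norm{\mathrm{proj}_{F_v}w}$ is controlled by $R_v(w,w)$; for an arbitrary subspace satisfying only condition $(2)$ this last step can fail, so one should fix this particular $F_v$. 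I expect no further obstacle — everything else is the power arithmetic with the exponent $3/2$, which, as remarked before the lemma, is precisely the source of the ``$n/4$'' threshold.
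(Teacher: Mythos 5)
Your proof is correct and follows essentially the same route as the paper's: apply Theorem \ref{thm:key estimate} to the component of $v$ orthogonal to $v_{x\theta}$, use $\sin\le 1$ together with the symmetry $R_v(v_{x\theta},v_{x\theta})=R_{v_{x\theta}}(v,v)$ to get $DdB_{(x,\theta)}(v,v)\gtrsim R_v(v_{x\theta},v_{x\theta})^{3/2}$, and then bound $dB^2_{(x,\theta)}(u,u)\le R_v(v_{x\theta},v_{x\theta})/C_0$ via the eigenvalue bound on $F_v$. Your explicit remark that the last step needs $F_v$ to be the eigenspace span from the $(3)\Rightarrow(2)$ construction (not an arbitrary subspace satisfying condition $(2)$) is a careful touch that the paper leaves implicit.
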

\begin{proof}
	We decompose $v$ as $v_1+v_2$ where $v_1$ is in the direction of $v_{x\theta}$, and
	$v_2$ is in the orthogonal direction, and we denote $\alpha$ the angle between
	$v_{x\theta}$ and $v$. We note that if $\sin\alpha=0$, then $v$ and $v_{x\theta}$ are
	parallel, so $dB^2_{(x,\theta)}(u,u)=0$, the inequality holds automatically. 
	
	On the other hand, if $\sin\alpha\ne 0$ then by Theorem \ref{thm:key estimate} we can estimate
\begin{align*}
DdB_{(x,\theta)}(v,v)&=DdB_{(x,\theta)}(v_2,v_2)\geq (\sin^2\alpha)\Big(C\cdot
	R_{v_{x\theta}}(\frac{v_2}{\norm{v_2}},\frac{v_2}{\norm{v_2}})\Big)^{3/2}\\
&=\frac{C^{3/2}}{|\sin
	\alpha|}R_{v_{x\theta}}(v_2,v_2)^{3/2}\geq C^{3/2} R_{v_{x\theta}}(v,v)^{3/2}\\
&=C^{3/2}R_v(v_{x\theta},v_{x\theta})^{3/2}
\end{align*}
	We note that when restricted to the subspace $F_v$, $R_v$ have eigenvalues at least $C_0$
	according to Lemma \ref{lem:n/3 condition}, hence
	$$R_v(v_{x\theta},v_{x\theta})\geq C_0\cos^2\big(\angle(v_{x\theta},F_v)\big)\geq C_0
	\cos^2\big(\angle(v_{x\theta},u)\big)=C_0 dB^2_{(x,\theta)}(u,u),$$
	Combining the two inequalities, we obtain
	$$dB^2_{(x,\theta)}(u,u)\leq C\Big(DdB_{(x,\theta)}(v,v)\Big)^{2/3}$$
	for some constant $C$ that depends on $\norm{R}$, $\norm{\nabla R}$, $\norm{\nabla^2 R}$, and
	the Ricci constant $C_0$ of Lemma \ref{lem:n/3 condition}. In
	particular it depends solely on $(M,g)$. This completes the proof.
\end{proof}

\begin{thm}\label{thm:Jacobian estimate} Suppose $M$ is a closed non-positively curved
$n$-manifold with negative $(\floor{\frac{n}{4}}+1)$-Ricci curvature, and
$\til{M}$ is its Riemannian universal cover. Let $x\in \til{M}$, $\theta\in \partial
\til{M}$, and $\nu$ be any probability measure that has full support on $\partial
\til{M}$. Then
	there exists a universal constant $C$ that only depends on $(M,g)$, so that
	$$\frac{\det\Big(\int_{\partial
	\til{M}}dB^2_{(x,\theta)}(\cdot,\cdot)d\nu(\theta)\Big)^{1/2}}{\det\Big(\int_{\partial
	\til{M}}DdB_{(x,\theta)}(\cdot,\cdot)d\nu(\theta)\Big)}\leq C$$
\end{thm}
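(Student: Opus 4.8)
The plan is to make the eigenvalue matching of Proposition~\ref{prop:reduction} work directly for the two integrated forms. Write $H=\int_{\partial\til{M}}dB^2_{(x,\theta)}(\cdot,\cdot)\,d\nu(\theta)$ and $K=\int_{\partial\til{M}}DdB_{(x,\theta)}(\cdot,\cdot)\,d\nu(\theta)$, both symmetric and positive semi-definite on $T_x\til{M}$; write $0\le\lambda_1\le\cdots\le\lambda_n$ and $\mu_1\le\cdots\le\mu_n$ for their eigenvalues and set $k=\floor{n/4}$. Since the hypothesis gives $\op{Ric}_n\le\op{Ric}_{\floor{n/4}+1}<0$, $\til{M}$ has negative Ricci curvature and Lemma~\ref{lem:strictly convex} shows $K$ is positive definite, so $\mu_1>0$. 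I record two a priori bounds. First, each $dB^2_{(x,\theta)}(\cdot,\cdot)$ is a rank-one form of trace $1$ (the gradient of a Busemann function is a unit vector), so $\op{Tr}(H)=\nu(\partial\til{M})=1$ and hence $\lambda_i\le1$ for all $i$. Second, $\mu_1\le\op{Tr}(K)=\int_{\partial\til{M}}\op{Tr}(DdB_{(x,\theta)})\,d\nu$, which is bounded above by a constant $B=B(M,g)$ because the horosphere mean curvatures are uniformly bounded by the same comparison estimate used in the proof of Theorem~\ref{thm:key estimate}.

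Next I bound $\det K$ from below. Applying Proposition~\ref{prop:average} to the continuous family $\theta\mapsto DdB_{(x,\theta)}$ and then Corollary~\ref{cor:n/4 small} gives
\[
\op{Tr}_{k+1}(K)\ \ge\ \inf_{\theta\in\partial\til{M}}\op{Tr}_{k+1}\big(DdB_{(x,\theta)}\big)\ \ge\ C_0 .
\]
Since $\op{Tr}_{k+1}(K)=\mu_1+\cdots+\mu_{k+1}$, this forces $\mu_{k+1}\ge C_0/(k+1)\ge C_0/n=:c$, hence $\mu_j\ge c$ for all $j\ge k+1$; using $\mu_i\ge\mu_1$ for $i\le k$ I obtain $\det K\ge\mu_1^{\,k}\,c^{\,n-k}$.

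The crux is an upper bound on the whole spectrum of $H$. Choose a unit eigenvector $v$ of $K$ for its smallest eigenvalue $\mu_1$, and let $F_v\subset T_x\til{M}$ be the span of the $n-k$ largest eigenvectors of $R_v$. As in the proof of $(3)\Rightarrow(2)$ in Lemma~\ref{lem:n/3 condition}, $F_v$ is orthogonal to $v$, has dimension $n-k$, and satisfies $R_v|_{F_v}\ge C_0$, so it verifies condition $(2)$ of that lemma. Lemma~\ref{lem:Busemann estimate} then gives $dB^2_{(x,\theta)}(u,u)\le C_1\big(DdB_{(x,\theta)}(v,v)\big)^{2/3}$ for all $u\in F_v$, and integrating against the probability measure $\nu$ together with Jensen's inequality applied to the concave map $t\mapsto t^{2/3}$ yields, for every unit $u\in F_v$,
\[
\langle Hu,u\rangle=\int_{\partial\til{M}}dB^2_{(x,\theta)}(u,u)\,d\nu\ \le\ C_1\Big(\int_{\partial\til{M}}DdB_{(x,\theta)}(v,v)\,d\nu\Big)^{2/3}=C_1\,\langle Kv,v\rangle^{2/3}=C_1\,\mu_1^{2/3}.
\]
Since $\dim F_v=n-k$, the min--max principle (taking the competing $j$-dimensional subspaces inside $F_v$) gives $\lambda_j\le C_1\mu_1^{2/3}$ for every $j\le n-k$; equivalently, at most $k$ eigenvalues of $H$ can exceed $C_1\mu_1^{2/3}$.

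Combining these, and using $\lambda_i\le1$ for the at most $k$ large eigenvalues of $H$ and $\lambda_i\le C_1\mu_1^{2/3}$ for the remaining $n-k$, I get $\det(H)^{1/2}\le\big(C_1\mu_1^{2/3}\big)^{(n-k)/2}$, whence
\[
\frac{\det(H)^{1/2}}{\det(K)}\ \le\ \frac{C_1^{(n-k)/2}}{c^{\,n-k}}\;\mu_1^{\,(n-4k)/3}.
\]
The exponent $(n-4k)/3$ is nonnegative precisely because $k=\floor{n/4}$, so $4k\le n$; combined with $0<\mu_1\le B$ this bounds the ratio by $C_1^{(n-k)/2}c^{-(n-k)}\max\{1,B^{(n-4k)/3}\}$, a constant depending only on $n$ and on $\norm{R},\norm{\nabla R},\norm{\nabla^2 R}$ and the Ricci constant $C_0$ of $(M,g)$. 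I expect the only genuinely delicate point to be this final exponent count: the power $2/3$ coming from Lemma~\ref{lem:Busemann estimate} (ultimately from Lemma~\ref{lem:cal}) must beat the at-most-$k$ possibly tiny eigenvalues of $K$, and the balance $\tfrac{2}{3}(n-k)\ge 2k$ is exactly the hypothesis $k\le\floor{n/4}$. The accompanying structural point is that one should \emph{not} try to pair $\lambda_i$ with $\mu_i$ index by index (the ratios $\lambda_i^{1/2}/\mu_i$ are not individually bounded); instead one bounds $\det H$ crudely from the fact that $H$ has at most $k$ large eigenvalues, bounds $\det K$ below by $\mu_1^{\,k}c^{\,n-k}$, and controls only the product.
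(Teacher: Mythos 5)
Your proposal is correct and follows essentially the same route as the paper's proof: the same lower bound $\det K\ge\lambda_1^{k}c^{\,n-k}$ via Proposition~\ref{prop:average} and Corollary~\ref{cor:n/4 small}, the same use of Lemma~\ref{lem:Busemann estimate} plus Jensen/H\"older on the eigendirection of the smallest eigenvalue of $K$ to force $n-k$ small eigenvalues of $H$, and the same exponent count $(n-4k)/3\ge 0$. The only differences are cosmetic (you use min--max where the paper uses $\op{Tr}_{n-k}(H)$, and you spell out the upper bound on the smallest eigenvalue of $K$ and on $\op{Tr}(H)$, which the paper leaves implicit).
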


\begin{proof}
We follow the framework of \cite{Besson95}, and set
$K_{x,\theta}:=DdB_{(x,\theta)}(\cdot,\cdot)$,
$H_{x,\theta}:=dB^2_{(x,\theta)}(\cdot,\cdot)$, and $K:=\int_{\partial
\til{M}}K_{x,\theta}(\cdot,\cdot)d\nu(\theta)$, $H=\int_{\partial
\til{M}}H_{x,\theta}(\cdot,\cdot)d\nu(\theta)$. Let $0< \lambda_1\leq\lambda_2\leq...\leq \lambda_n$ be the eigenvalues of $K$, and let $v$ be the eigenvector corresponding to $\lambda_1$. Then there is a constant $C'$ depending on $(M,g)$ such that,
for any $u\in F_v$, we have
\begin{align*}
    H(u,u)&=\int_{\partial \til{M}}H_{x,\theta}(u,u)d\nu(\theta)\leq C'\int_{\partial
	\til{M}}K_{x,\theta}(v,v)^{2/3}d\nu(\theta)\\
&\leq C'\Big(\int_{\partial
	\til{M}}K_{x,\theta}(v,v)d\nu(\theta)\Big)^{2/3}=C'\lambda_1^{2/3}
\end{align*}
	where the first inequality is due to Lemma \ref{lem:Busemann estimate}, and the
	second is the H\"older inequality. Therefore Lemma \ref{lem:n/3 condition} applies, and we can find an orthonormal frame
	$e_1,e_2,...,e_{n-k}$ at $x$ so that $H(e_i,e_i)\leq C'\lambda_1^{2/3}$ for $1\leq
	i\leq n-k$, where $k=\floor{\frac{n}{4}}$. This implies that
	$$\textrm{Tr}_{n-k}(H)\leq \sum_{i=1}^{n-k} H(e_i,e_i)\leq (n-k)C'\lambda_1^{2/3}.$$
	If we further denote $\mu_1\leq \mu_2\leq...\leq \mu_n$ the eigenvalues of $H$, then,
	we have
	$$\mu_i\leq \textrm{Tr}_{n-k}(H)\leq (n-k)C'\lambda_1^{2/3},$$ for $1\leq i\leq n-k$, since $\textrm{Tr}_{n-k}(H)$ is the sum of $n-k$ least eigenvalues of $H$.
	Note also that the eigenvalues of $H$ are at most $1$ and $k\leq n/4$, so we can
	estimate the following
	$$\frac{(\det H)^{1/2}}{\det K}= \frac{\prod_{i=1}^n \mu_i^{1/2}}{\prod_{i=1}^n
	\lambda_i}\leq \frac{[(n-k)C'\lambda_1^{2/3}]^{\frac{n-k}{2}}}{\lambda_1^k
	\lambda_{k+1}^{n-k}}\leq \frac{C''}{\lambda_{k+1}^{n-k}}$$
	for some constant $C''$ depending on $(M,g)$. Finally, we can bound $\lambda_{k+1}$
	in the following way:
	$$\lambda_{k+1}\geq \frac{1}{k+1} \textrm{Tr}_{k+1}(K) \geq
	\frac{1}{k+1}\inf_{\theta\in \partial \til{M}}\textrm{Tr}_{k+1}(K_{x,\theta})\geq
	\frac{C_0}{k+1},$$
	where the second inequality is due to Proposition \ref{prop:average}, and the third
	inequality is by Corollary \ref{cor:n/4 small}. Therefore, we conclude by combining
	the above inequalities that
	$$\frac{(\det H)^{1/2}}{\det K}\leq C''\Big(\frac{k+1}{C_0}\Big)^{n-k}\leq C$$
	where $C$ is a constant depending on $(M,g)$. Or more specifically, $C$ depends on the dimension $n$, the bounds on $\norm{R}$, $\norm{\nabla R}$, $\norm{\nabla^2 R}$, and
	the corresponding Ricci constant $C_0$ of Lemma \ref{lem:n/3 condition}
\end{proof}

As a direct consequence, we prove Theorem \ref{thm:main}.

\vspace{.3cm}

\textbf{Proof of Theorem \ref{thm:main}:} According to Theorem \ref{thm:Jacobian estimate} and Proposition \ref{prop:reduction}, we see that the barycentric straightening is a straightening on $\til{M}$, if $M$ satisfies the negative $(\floor{\frac{n}{4}}+1)$-Ricci curvature condition. Therefore, by Theorem \ref{thm:straightening}, the simplicial volume $\norm{M}>0$.

\section{The natural maps}
\label{section:natural}

In this section we describe the natural map, an essential
ingredient in the method of Besson-Courtois-Gallot, which we will use to prove the volume estimate of Theorem \ref{thm:vol-ent}.

Let $M$ continue to denote a closed oriented nonpositively curved Riemannian manifold with negative $(\floor{\frac{n}{4}}+1)$-Ricci curvature, and let $N$ be an arbitrary closed oriented manifold of the same dimension. Assume $f$ is a continuous map from $N$ to $M$. Let $\phi$ denote a choice of lift of $f$ to universal covers, i.e. $\phi=\til{f}:\til{N}\to \til{M}$.
We will also denote the metric and Riemannian volume form on the
universal cover $\til{N}$ by $g$ and $dg$ respectively.  Then for each
$s>h(g)$ and $y\in \til{N}$ consider the probability measure $\mu_y^s$
on $\til{N}$ in the Lebesgue class with density given by
$$\frac{d\mu_y^s}{dg}(z)=\frac{e^{-sd(y,z)}}{\int_{N}
  e^{-sd(y,z)}dg}.$$
The $\mu_y^s$ are well defined by the choice of $s$.

Consider the push-forward $\phi_*\mu_y^s$, which is a measure on $\til{M}$.
Define $\sigma_y^s$ to be the convolution of $\phi_*\mu_y^s$
with the Patterson-Sullivan measure $\nu_z$ on $\pa\til{M}$.

In other words, for $U\subset\partial \til{M}$ a Borel set, define
$$\sigma_y^s(U)=\int_{\til{M}}\nu_z(U)d(\phi_*\mu_y^s)(z)$$


For each $s>h(M)$, $\sigma^s_y$ has full support since each $\nu_z$ does. Therefore we define maps
$\widetilde{F}_s:\til{N}\to \til{M}$ by
$$\widetilde{F}_s(y)= \bary({\sigma_y^s}).$$

Note that $\sigma_y^s$ may not be a probability measure since we do not necessarily have $\Vert \nu_z \Vert=1$. However the barycenter is invariant under scaling of the measure.

The equivariance of $\phi$ and of $\{\nu_z\}$ and $\mu_y^s$ implies that
$\widetilde{F}_s$ is also equivariant. Hence $\widetilde{F}_s$
descends to a map $F_s: N\to M$. It is easy to see that $F_s$ is
homotopic to $f$.

\begin{prop}
\label{prop:homotopy}
The map $\Psi_s\colon [0,1]\times N\to M$ defined by
$$\Psi_s(t,y)=F_{s+\frac t{1-t}}(y)$$
is a homotopy between
$\Psi_s(0,\cdot)=F_s$ and $\Psi_s(1,\cdot)=f$.
\end{prop}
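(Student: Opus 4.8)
The plan is to prove that $\Psi_s$ is continuous on all of $[0,1]\times N$; since $\Psi_s(0,\cdot)=F_s$ and $\Psi_s(1,\cdot)=f$ are built into the definition (the latter being precisely the assertion that $\lim_{r\to\infty}F_r(y)=f(y)$, which we must also verify), this yields the claimed homotopy. I will split the argument into the open part $t\in[0,1)$ and the endpoint $t=1$.

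On $[0,1)\times N$ we have $\Psi_s(t,y)=F_{r(t)}(y)$ with $r(t)=s+\tfrac t{1-t}$ varying continuously over $(h(M),\infty)$, so it suffices to show $(r,y)\mapsto F_r(y)$ is continuous on $(h(M),\infty)\times N$. By the equivariance of $\phi$, of $\{\nu_z\}$ and of $\mu_y^r$, I may lift and check continuity of $(r,y)\mapsto\widetilde F_r(y)=\bary(\sigma_y^r)$ for $y$ in a compact fundamental domain of $\til N$. The assignment $(r,y)\mapsto\mu_y^r$ is weakly continuous: its density $e^{-rd(y,\cdot)}/\!\int_{\til N}e^{-rd(y,\cdot)}dg$ depends continuously on $(r,y)$ and the normalizing integral is finite and continuous for $r>h(g)$; hence $(r,y)\mapsto\phi_*\mu_y^r$ and then $(r,y)\mapsto\sigma_y^r$ are weakly continuous. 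It then remains to know that $\nu\mapsto\bary(\nu)$ is continuous on the relevant family of fully supported Borel measures on the compact space $\partial\til M$. This follows from the implicit function theorem applied to the $1$-form equation $\int_{\partial\til M}dB_{(x,\theta)}(\cdot)\,d\nu(\theta)\equiv 0$, whose derivative in $x$ is the Hessian $\int_{\partial\til M}DdB_{(x,\theta)}\,d\nu(\theta)$, positive definite by Lemma \ref{lem:strictly convex}, together with the fact that the minimizers do not escape to infinity (the functions $\mathcal B_\nu$ are uniformly proper over this family, by full support and the compactness of $N$).

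For the endpoint $t=1$, i.e. $r\to\infty$, the probability measures $\mu_y^r$ concentrate at $y$: for any neighbourhood $W$ of $y$ one has $\mu_y^r(\til N\setminus W)\to 0$, uniformly for $y$ in the fundamental domain. Hence $\phi_*\mu_y^r\to\delta_{\phi(y)}$ weakly, and since $z\mapsto\nu_z$ is weakly continuous, $\sigma_y^r=\int_{\til M}\nu_z\,d(\phi_*\mu_y^r)(z)\to\nu_{\phi(y)}$ weakly, again uniformly in $y$. By the continuity of the barycenter just established, $\widetilde F_r(y)\to\bary(\nu_{\phi(y)})$. The step I expect to be the main obstacle is the identity $\bary(\nu_z)=z$ for all $z\in\til M$: this says the strictly convex function $x\mapsto\int_{\partial\til M}B(x,\theta)\,d\nu_z(\theta)$ is minimized at $x=z$, equivalently (differentiating at $x=z$, using $\nabla_xB(x,\theta)|_{x=z}=-v_{z\theta}$ and the conformal density relation linking $\mu_z$ and $\mu_O$) that $\int_{\partial\til M}v_{z\theta}\,d\mu_z(\theta)=0$ in $T_z\til M$. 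This balancing property of the Patterson--Sullivan measures is exactly what underlies the earlier remark that $F_s$ is homotopic to $f$, and makes $\Psi_s(1,\cdot)=f$ literally true. Granting it, $\widetilde F_r(y)\to\phi(y)$ and hence $F_r(y)\to f(y)$ uniformly in $y$, so $\Psi_s$ extends continuously across $t=1$.

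Combining the two parts, $\Psi_s\colon[0,1]\times N\to M$ is a continuous map with $\Psi_s(0,\cdot)=F_s$ and $\Psi_s(1,\cdot)=f$, which is the desired homotopy. The two places demanding care are (i) the continuity of $\nu\mapsto\bary(\nu)$, resting on Lemma \ref{lem:strictly convex} plus a uniform non-escape estimate for the minimizers, and (ii) the endpoint identification $\bary(\nu_z)=z$ together with the uniformity over the compact base $N$ of the concentration $\sigma_y^r\to\nu_{\phi(y)}$ needed to glue the homotopy continuously at $t=1$.
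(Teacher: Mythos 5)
Your overall route is the same as the paper's (much terser) proof: continuity of $(s,y)\mapsto \til{F}_s(y)$ handles $t\in[0,1)$, and the concentration $\mu_y^s\to\delta_y$, hence $\sigma_y^s\to\nu_{\phi(y)}$, handles the endpoint $t=1$. The paper compresses all of this into three sentences, so your elaborations (weak continuity of $(s,y)\mapsto\sigma_y^s$, continuity of $\nu\mapsto\bary(\nu)$ via the implicit function theorem, Lemma \ref{lem:strictly convex}, and a non-escape estimate for the minimizers) are precisely the details the paper leaves implicit, and they are the right ones.

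The step you explicitly ``grant,'' namely $\bary(\nu_z)=z$, is however a genuine gap and not a routine verification. In the locally symmetric setting the $K$-invariance of the Patterson--Sullivan measure at $z$ forces the balancing condition $\int_{\partial\til{M}}v_{z\theta}\,d\nu_z(\theta)=0$, but Knieper's measures on a general rank one manifold have no such pointwise symmetry, and the identity does not follow from $\Gamma$-equivariance plus the cocycle relation $d\mu_x/d\mu_y=e^{hB(x,y,\theta)}$; in general one only gets that $z\mapsto\bary(\nu_z)$ is a continuous equivariant map at bounded distance from the identity. To be fair, the paper's proof elides exactly the same point (it passes from $\sigma_y^s\to\nu_{\phi(y)}$ to $\lim_{s\to\infty}\til{F}_s(y)=\phi(y)$ without comment). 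The standard repair, which is all that the application in Theorem \ref{thm:vol-ent} uses, is to settle for the homotopy class: $\til{F}_s$ and $\phi$ are $\Gamma$-equivariant maps into the Hadamard manifold $\til{M}$, so the geodesic homotopy $(t,y)\mapsto\gamma_{\til{F}_s(y)\,\phi(y)}(t)$ is equivariant and continuous and descends to a homotopy $F_s\simeq f$, with no endpoint identity needed. If you insist on the literal conclusion $\Psi_s(1,\cdot)=f$, you must either prove $\bary(\nu_z)=z$ (doubtful in this generality) or adjust the homotopy at $t=1$ by concatenating with such a geodesic homotopy; as written, your argument (like the paper's) only establishes the homotopy onto the map $y\mapsto\bary(\nu_{\phi(y)})$ descended to $N$.
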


\begin{proof}
  From its definitions, $\til{F}_s(y)$ is continuous in $s$ and $y$. Observe that for
  fixed $y$, $\lim_{s\to \infty}\sigma^s_y=\nu_{\phi(y)}$. If follows that $\lim_{s\to
  \infty}\til{F}_s(y)=\phi(y)$. This implies the proposition.
\end{proof}

As in \cite{Besson95}, the implicit function theorem together with Lemma \ref{lem:strictly convex} implies that $F_s$ is $C^1$, and we will estimate
its Jacobian.

\begin{thm}[The Jacobian Estimate]
\label{thm:JacF}
For all $s>h(g)$ and all $y\in N$ we have
$$\vert \Jac F_s(y)\vert\le C s^n$$
for some constant $C$, depending only on $\til{M}$.
\end{thm}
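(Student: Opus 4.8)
The plan is to run the Besson--Courtois--Gallot argument exactly as in Section~\ref{sec:barystraight}, with the vertex Patterson--Sullivan measure replaced by the smoothed measure $\sigma_y^s$, and then to feed the resulting determinant ratio into Theorem~\ref{thm:Jacobian estimate}. Since $\widetilde F_s$ is $\Gamma$-equivariant and the covering maps are local isometries, it suffices to bound $\Jac\widetilde F_s$ at a lift $\til{y}\in\til{N}$; write $x=\widetilde F_s(\til{y})$. By definition $x$ minimizes $w\mapsto\int_{\partial\til{M}}B(w,\theta)\,d\sigma_y^s(\theta)$, so the $1$-form identity $\int_{\partial\til{M}}dB_{(x,\theta)}(\cdot)\,d\sigma_y^s(\theta)\equiv 0$ holds on $T_x\til{M}$. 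The $(\floor{n/4}+1)$-Ricci condition gives $\dim\textrm{null}(R_v)\le n/4<n$ (Lemma~\ref{lem:n/3 condition}), hence $\op{Ric}<0$ on $\til{M}$; combined with the full support of $\sigma_y^s$, Lemma~\ref{lem:strictly convex} shows that $K:=\int_{\partial\til{M}}DdB_{(x,\theta)}(\cdot,\cdot)\,d\sigma_y^s(\theta)$ is positive definite, which (as the text already records) makes $\widetilde F_s$ a $C^1$ map. Differentiating the $1$-form identity in a direction $u\in T_{\til{y}}\til{N}$ gives, for all $v\in T_x\til{M}$,
\[
\big\langle K\big(D_{\til{y}}\widetilde F_s(u)\big),v\big\rangle=-\int_{\partial\til{M}}dB_{(x,\theta)}(v)\,d\big(\partial_u\sigma_y^s\big)(\theta).
\]

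Next I would compute $\partial_u\sigma_y^s$. The measure $\sigma_y^s$ is determined by $\int_{\partial\til{M}}\Xi(\theta)\,d\sigma_y^s(\theta)=\int_{\til{N}}\big(\int_{\partial\til{M}}\Xi\,d\nu_{\phi(w)}\big)d\mu_y^s(w)$, in which neither $\phi$ nor the family $\{\nu_z\}$ depends on $y$; only the density $\tfrac{d\mu_y^s}{dg}(w)=e^{-sd(y,w)}\big/\!\int_{\til{N}}e^{-sd(y,w')}dg(w')$ does. Since $d(y,\cdot)$ is smooth off $y$ on a Hadamard manifold and $s>h(g)$, differentiating under the integral sign (legitimate as in \cite{Besson95}) yields
\[
\partial_u\sigma_y^s(U)=\int_{\til{N}}\nu_{\phi(w)}(U)\,\big(\psi_u(w)-\bar\psi_u\big)\,d\mu_y^s(w),\qquad\psi_u(w):=-s\,\langle\nabla_y d(y,w),u\rangle,\ \ \bar\psi_u:=\int_{\til{N}}\psi_u\,d\mu_y^s.
\]
Substituting into the previous display and applying the Cauchy--Schwarz inequality to the functions $w\mapsto\int_{\partial\til{M}}dB_{(x,\theta)}(v)\,d\nu_{\phi(w)}(\theta)$ and $w\mapsto\psi_u(w)-\bar\psi_u$, and then Cauchy--Schwarz again inside the first function, one gets for a unit vector $u$
\[
\big|\langle K(D_{\til{y}}\widetilde F_s(u)),v\rangle\big|\le\Big(\int_{\til{N}}(\psi_u-\bar\psi_u)^2 d\mu_y^s\Big)^{1/2}\Big(\int_{\til{N}}\int_{\partial\til{M}}dB^2_{(x,\theta)}(v)\,d\nu_{\phi(w)}(\theta)\,\|\nu_{\phi(w)}\|\,d\mu_y^s(w)\Big)^{1/2}\le C_1\,s\,\langle H(v),v\rangle^{1/2},
\]
where $H:=\int_{\partial\til{M}}dB^2_{(x,\theta)}(\cdot,\cdot)\,d\sigma_y^s(\theta)$. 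Here the first factor is $\le s$ because $\|\nabla_y d(y,w)\|=1$ for $w\ne y$, i.e.\ $\mu_y^s$-a.e.; the second is $\le C_1^2\langle H(v),v\rangle$ with $C_1:=\big(\sup_z\|\nu_z\|\big)^{1/2}<\infty$ (finite since $z\mapsto\|\nu_z\|$ is $\Gamma$-invariant and $M$ is compact), using the defining identity for $\sigma_y^s$.

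This is the precise analogue of \eqref{eqn:Q1-bounds-Q2} with the constant $2$ replaced by $C_1 s$, so the determinant computation of Section~\ref{sec:barystraight} transcribes verbatim: choosing an orthonormal eigenbasis $\{v_i\}$ of $H$ on $T_x\til{M}$ and the orthonormal basis $\{u_i\}$ of $T_{\til{y}}\til{N}$ obtained by Gram--Schmidt from $\{(K\circ D_{\til{y}}\widetilde F_s)^{-1}(v_i)\}$ makes $\big(\langle K\,D_{\til{y}}\widetilde F_s(u_i),v_j\rangle\big)$ upper triangular, and multiplying the diagonal entries gives $|\det K|\,|\Jac\widetilde F_s(\til{y})|\le(C_1 s)^n(\det H)^{1/2}$, i.e.\ $|\Jac F_s(y)|\le(C_1 s)^n(\det H)^{1/2}/\det K$. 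Finally, $\sigma_y^s$ has full support and its total mass $\|\sigma_y^s\|=\int_{\til{N}}\|\nu_{\phi(w)}\|\,d\mu_y^s(w)$ is pinched between two positive constants depending only on $\til{M}$; since $(\det H)^{1/2}/\det K$ scales like $\|\sigma\|^{-n/2}$ under $\sigma\mapsto c\sigma$, applying Theorem~\ref{thm:Jacobian estimate} to the probability measure $\sigma_y^s/\|\sigma_y^s\|$ bounds $(\det H)^{1/2}/\det K$ by a constant depending only on $\til{M}$. Combining, $|\Jac F_s(y)|\le C s^n$.

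The only genuinely new ingredients here — everything else being a transcription of Sections~\ref{sec:PS}--\ref{sec:Jacob} or of \cite{Besson95} — are the derivative formula for $\partial_u\sigma_y^s$ (for which one must check that the $y$-dependence enters only through $\mu_y^s$ and that differentiation under the integral is justified when $s>h(g)$) and the harmless bookkeeping caused by $\sigma_y^s$ not being a probability measure; I expect these, rather than anything deep, to be where care is needed. The substantial geometric content, namely controlling $(\det H)^{1/2}/\det K$ by matching the small eigenvalues of $K$ against eigenvalues of $H$ in geometric rank one, is already packaged in Theorem~\ref{thm:Jacobian estimate} and is simply invoked.
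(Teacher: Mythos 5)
Your proposal is correct and follows essentially the same route as the paper: implicit differentiation of the barycenter equation for $\sigma_y^s$, a Cauchy--Schwarz/H\"older estimate turning the $y$-derivative term into $s\,\langle H(v),v\rangle^{1/2}$ (you redo the upper-triangular basis trick of Section~\ref{sec:barystraight} where the paper applies a matrix H\"older inequality to the numerator determinant, a cosmetic difference), and then Theorem~\ref{thm:Jacobian estimate} applied to the normalized measure $\sigma_y^s/\norm{\sigma_y^s}$. One small slip: $\til{N}$ is the universal cover of an \emph{arbitrary} closed manifold, so it need not be Hadamard and $d(y,\cdot)$ is only Lipschitz and $C^1$ off the (measure-zero) cut locus, but this weaker regularity is exactly what the paper uses and suffices for your differentiation under the integral and the bound $\abs{\nabla_y d(y,w)}=1$ a.e.
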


\begin{proof}

We obtain the differential of $F_s$ by implicit differentiation:
$$0=D_{x=F_s(y)}\mathcal{B}_{\sigma_y^s}(x)=\int_{\pa \til{M}}
d{B}_{(F_s(y),\theta)}(\cdot) d\sigma^s_y(\theta)$$

Hence as $2$-forms
\begin{gather*}
0=D_y D_{x=F_s(y)}\mathcal{B}_{\sigma_y^s}(x)=\int_{\pa \til{M}}
Dd{B}_{(F_s(y),\theta)}(D_y F_s(\cdot),\cdot) d\sigma^s_y(\theta)\\
 -s \int_{N}\int_{\pa \til{M}} d{B}_{(F_s(y),\theta)}(\cdot)
\inner{ \nabla_y d(y,z),\cdot} d\nu_{\phi(z)}(\theta) d\mu^s_y(z)
\end{gather*}

The distance function $d(y,z)$ is Lipschitz and $C^1$ off of the cut
locus which has Lebesgue measure 0. It follows from the Implicit
Function Theorem (see \cite{Besson98}) that $F_s$ is $C^1$ for $s>h(g)$.
By the chain rule,
$$\Jac F_s = s^n\frac{\det\left(\int_{N}\int_{\pa \til{M}}
    d{B}_{(F_s(y),\theta)}(\cdot) \inner{ \nabla_y
      d(y,z),\cdot} d\nu_{\phi(z)}(\theta) d\mu^s_y(z)
  \right)}{\det\left(\int_{\pa \til{M}}
    Dd{B}_{(F_s(y),\theta)}(\cdot,\cdot)
    d\sigma^s_y(\theta)\right)}$$

Dividing numerator and denominator by $\frac{1}{\norm{\sigma^s_y}}$ and applying H{\"o}lder's inequality to the numerator gives:

$$|\Jac F_s|\leq s^n
\frac{\det\left(\frac{1}{\norm{\sigma^s_y}}\int_{\pa \til{M}} d{B}_{(F_s(y),\theta)}^2 d\sigma^s_y(\theta)\right)^{1/2}
  \det\left(\int_{N} \inner{\nabla_y d(y,z),\cdot}^2 d\mu^s_y(z)\right)^{1/2}}
 {\det\left(\frac{1}{\norm{\sigma^s_y}}\int_{\pa \til{M}} Dd{B}_{(F_s(y),\theta)}(\cdot,\cdot) d\sigma^s_y(\theta)\right)}$$

Using that $\tr \inner{\nabla_y d(y,z),\cdot}^2 =\left|\nabla_y
  d(y,z)\right|^2=1$, except possibly on a measure $0$ set, we may
estimate
$$\det\left(\int_{N} \inner{\nabla_y d(y,z),\cdot}^2
  d\mu^s_y(z)\right)^{1/2}\leq \left(\frac{1}{\sqrt{n}}\right)^n$$

Therefore
\begin{align}
\label{eq:Jac1}
| \Jac F_s|\leq
\left(\frac{s}{\sqrt{n}}\right)^n\frac{\det\left(\frac{1}{\norm{\sigma^s_y}}\int_{\pa \til{M}} d{B}_{(F_s(y),\theta)}^2
 d\sigma^s_y(\theta)\right)^{1/2}}{\det\left(\frac{1}{\norm{\sigma^s_y}} \int_{\pa \til{M}} Dd{B}_{(F_s(y),\theta)}(\cdot,\cdot) d\sigma^s_y(\theta)\right)}
\end{align}

Applying Theorem \ref{thm:Jacobian estimate} completes the proof.

\end{proof}

Now we prove Theorem \ref{thm:vol-ent}, which we restate for convenience:
{
\def\thetheorem{\ref{thm:vol-ent}}
\begin{theorem}
Let $M$ be a closed oriented nonpositively curved Riemannian $n$-manifold 
with $\op{Ric}_{\floor{\frac{n}{4}}+1}<0$. Then there is a scale-invariant constant
$C>0$ depending only on the metric of $\til{M}$ such that for any map $f:N\to M$ from a closed oriented Riemannian manifold $N$ we have 
 \[
 h(N)^{n}\vol(N)\geq C \abs{\deg f}h(M)^n\vol(M)
 \]
 where $h(N)$ is the volume growth entropy of $N$.
\end{theorem}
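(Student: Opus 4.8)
The strategy is the standard Besson--Courtois--Gallot degree argument, now powered by the Jacobian estimate of Theorem \ref{thm:JacF}. First I would recall that the natural map $F_s\colon N\to M$ constructed above is homotopic to $f$ (Proposition \ref{prop:homotopy}), hence $\deg(F_s)=\deg(f)$ for every $s>h(g)$, where $g$ denotes a chosen metric on $N$. Since $F_s$ is $C^1$ and $M$ is oriented, the degree can be computed by integrating the pullback of the Riemannian volume form $\omega_M$ of $M$:
\[
\abs{\deg f}\,\vol(M)=\abs{\int_N F_s^*\omega_M}\le \int_N \abs{\Jac F_s(y)}\,dg(y).
\]
Then I would apply Theorem \ref{thm:JacF}, which gives $\abs{\Jac F_s(y)}\le C s^n$ with $C=C(\til M)$ scale-invariant, to conclude
\[
\abs{\deg f}\,\vol(M)\le C\,s^n\,\vol(N,g)
\]
for every $s>h(g)$.

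Next I would let $s\to h(g)^+$, obtaining $\abs{\deg f}\,\vol(M)\le C\,h(g)^n\,\vol(N,g)$. At this point the inequality still carries $h(M)$ only implicitly through the constant, so the remaining task is to make the dependence on $h(M)$ explicit and scale-correct. To do this I would observe that the whole construction is scale-invariant: replacing the metric on $M$ by $\lambda^2 g_M$ multiplies $h(M)$ by $\lambda^{-1}$ and $\vol(M)$ by $\lambda^n$, so $h(M)^n\vol(M)$ is a scale invariant of $M$, and likewise $h(g)^n\vol(N,g)$ is a scale invariant of $N$; since the constant $C$ in Theorem \ref{thm:JacF} depends only on $\til M$ and is itself scale-invariant (the Patterson--Sullivan densities, Busemann Hessians, and the ratio $\det(H)^{1/2}/\det(K)$ all transform correctly, and Theorem \ref{thm:Jacobian estimate} gives a bound independent of the measure), one can absorb the factor $h(M)^n$. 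Concretely, I would rescale the metric on $M$ so that $h(M)=1$; applying the displayed inequality to this normalized metric yields $\abs{\deg f}\,\vol(M)\le C\,h(g)^n\,\vol(N,g)$ with the left side now equal to $\abs{\deg f}\,h(M)^{n}\vol(M)$ in the original normalization, and the right side unchanged because it is a scale invariant of $N$. Optimizing over metrics $g$ on $N$ within its conformal... more precisely, within the smooth structure, one replaces $h(g)^n\vol(N,g)$ by its infimum, which by definition is related to $h(N)^n\vol(N)$; since $h(N)^n\vol(N)$ is the minimal volume entropy invariant, we have $h(g)^n\vol(N,g)\ge$ nothing smaller, so in fact the cleanest route is: the inequality holds for the given metric $g$ on $N$, and $h(N)$ in the statement \emph{is} $h(g)$ for that metric, so no optimization is needed and we directly get
\[
h(N)^n\vol(N)\ge \frac1C\,\abs{\deg f}\,h(M)^n\vol(M).
\]

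The main obstacle, and the step requiring the most care, is verifying the scale-invariance claim for the constant $C$ and correctly tracking the powers of $h(M)$: one must check that Theorem \ref{thm:Jacobian estimate}'s bound really is a scale invariant of $\til M$ (the factor $h$ appears in the Patterson--Sullivan cocycle $e^{hB}$, so rescaling the metric rescales $h$ and $B$ inversely, leaving $\nu_z$ and hence the forms $H_{x,\theta},K_{x,\theta}$ genuinely scale-invariant once one normalizes by $\norm{\sigma^s_y}$), and that the factor $s^n$ in Theorem \ref{thm:JacF}, evaluated in the limit $s\to h(g)$, supplies exactly the $h(N)^n=h(g)^n$ appearing in the conclusion. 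Everything else — the degree formula, the homotopy invariance of degree, passing to the limit in $s$ — is routine. The orientation hypotheses enter only to make $\deg f$ defined over $\Z$ and the volume-form pullback argument valid; as noted in the remark following the theorem, they can be dropped using the appropriate generalized degree.
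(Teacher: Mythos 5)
Your proposal is correct and follows essentially the same route as the paper's proof: homotope $f$ to the natural map $F_s$ (Proposition \ref{prop:homotopy}), bound $\abs{\deg f}\vol(M)=\abs{\int_N F_s^*dg_0}$ by $\int_N\abs{\Jac F_s}\,dg$, invoke Theorem \ref{thm:JacF}, and let $s\to h(N)$. The only differences are cosmetic: the paper produces the $h(M)^n$ factor by first showing $h(M)>0$ (applying the inequality to the identity map on $M$) and then setting $C=C_1/h(M)^n$, making the constant scale-invariant by taking a supremum over rescalings of $M$, whereas you normalize $h(M)=1$ --- equivalent, though it tacitly assumes $h(M)>0$ (the case $h(M)=0$ renders the inequality vacuous, so this is harmless, but it should be noted). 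Also, your parenthetical assertion that $H_{x,\theta}$, $K_{x,\theta}$ and the bound of Theorem \ref{thm:Jacobian estimate} are themselves scale-invariant is not correct as stated (under $g\mapsto\lambda^2 g$ the ratio $\det(H)^{1/2}/\det(K)$ scales like $\lambda^n$), but your operative argument --- fix the scale by the normalization and use that $h(N)^n\vol(N)$ and $h(M)^n\vol(M)$ are scale invariants --- does not actually need that claim.
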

\addtocounter{theorem}{-1}
}

\begin{proof}
From degree theory note that $f$ is isotopic to a $C^1$-map, which we again denote by $f$, and we have
\begin{align*}
\abs{\op{deg}(f)}\Vol(M)&=\abs{\int_N f^*dg_0}=\abs{\int_N F_s^*dg_0}\\
&\leq \int_N \abs{\Jac F_s}dg \leq C_M s^n \Vol(N).
\end{align*}
The second equality follows from Proposition \ref{prop:homotopy} and the last inequality from
Theorem \ref{thm:JacF}. Finally we take $C_1=\frac{1}{C_M}$ and then take the limit as $s\to h(N)$ to obtain $h(N)^{n}\vol(N)\geq C_1 \abs{\deg f}\vol(M)$. Applying this inequality to the identity map and $N=M$ shows that $h(M)>0$. Hence we may take $C=\frac{C_1}{h(M)^n}$ to obtain the inequality of the theorem.

From the proof of the Jacobian estimate, we observe that $C_M$ only depends on the metric $\til{g}$ on $\til{M}$, which is diffeomorphic to $\R^n$. The same dependence is well known for $h(M)$. Also, the quantity $h(M)^n\vol(M)$ is scale invariant, and therefore if $C$ is not scale invariant we may choose it to be so by taking the supremum of its values over scalings of $M$.
\end{proof}


\section{Applications}\label{sec:Apps}

In this section we provide some applications of the above theorems. First we remark that Gromov in \cite{Gromov82} showed a proportionality principle for manifolds $M$ and $N$ sharing a common universal cover. Namely,

\[
\frac{\norm{N}}{\Vol(N)}=\frac{\norm{M}}{\Vol(M)}.
\]

More generally, if $f:N\to M$ is any map of nonzero degree then since $f$ induces a map at the level of singular chains such that $f_*[N]=\deg(f)[M]$, we have that $\norm{M}>0$ implies $\norm{N}>0$.

Now we recall that the dual of equivariant chains on $\til{M}$ with $L^1$ coefficients are equivariant cochains on $\til{M}$ with $L^\infty$ coefficients. This leads to the following application.

\subsection{Bounded cohomology} The notion of bounded cohomology is defined in \cite{Gromov82}, and has been studied in various contexts. If $M$ is a closed manifold, then the positivity of the simplicial volume $\norm{M}$ is equivalent to the surjectivity of the top dimensional comparison map $\eta:H^n_b(M,\mathbb R) \rightarrow H^n(M,\mathbb R)$ \cite{Gromov82}. Hence our Theorem \ref{thm:main} immediately implies the following:

\begin{cor} If $M$ is an $n$-dimensional closed nonpositively curved manifold that has $\op{Ric}_{\floor{\frac{n}{4}}+1}<0$, then the comparison map
$$\eta:H^n_b(M,\mathbb R) \rightarrow H^n(M,\mathbb R)$$
is surjective. In particular, the bounded cohomology $H^n_b(M,\mathbb R) $ is nontrivial.
\end{cor}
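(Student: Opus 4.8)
The plan is to obtain this as an immediate corollary of Theorem \ref{thm:main} together with Gromov's duality between the $\ell^1$-homology seminorm and bounded cohomology. First I would invoke Theorem \ref{thm:main} to conclude that $\norm{M} > 0$. If $M$ were not assumed orientable one would first pass to the orientation double cover $\til{M}_2$, which still carries a nonpositively curved metric with $\op{Ric}_{\floor{\frac{n}{4}}+1} < 0$, apply Theorem \ref{thm:main} to get $\norm{\til{M}_2} > 0$, and then use multiplicativity of simplicial volume under finite covers together with the corresponding transfer statement for bounded cohomology; in the intended (oriented) setting this step is unnecessary.

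Second, I would recall Gromov's theorem (\cite{Gromov82}) that for a closed oriented $n$-manifold the comparison map $\eta\colon H^n_b(M,\R) \to H^n(M,\R)$ is surjective if and only if $\norm{M} > 0$. The mechanism is the Kronecker pairing $H^n(M,\R)\otimes H_n(M,\R)\to\R$: the simplicial volume is, up to normalization by the fundamental class, the $\ell^1$-seminorm of $[M]$, and by the Hahn--Banach theorem this seminorm is detected by bounded $n$-cocycles. Concretely, $\norm{M}>0$ precisely when some bounded cocycle pairs nontrivially with $[M]$, i.e. when the generator of $H^n(M,\R)$ lies in the image of $\eta$. Since $H^n(M,\R)\cong\R$ is one-dimensional, this is the same as surjectivity of $\eta$. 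Hence Theorem \ref{thm:main} yields that $\eta$ is onto.

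Finally, because $M$ is a closed oriented $n$-manifold we have $H^n(M,\R)\cong\R\neq 0$, so a surjection onto it from $H^n_b(M,\R)$ forces $H^n_b(M,\R)\neq 0$, which gives the ``in particular'' clause. There is essentially no genuine obstacle in this argument beyond citing the duality statement correctly and, if desired, spelling out the orientation-cover reduction; all of the geometric content has already been spent in establishing Theorem \ref{thm:main} via the Jacobian estimate of Theorem \ref{thm:Jacobian estimate}.
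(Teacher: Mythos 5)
Your argument is correct and is essentially the paper's own: the paper derives this corollary immediately from Theorem \ref{thm:main} together with Gromov's duality statement that positivity of $\norm{M}$ is equivalent to surjectivity of the top-dimensional comparison map, which is exactly your route (your orientation-double-cover remark and the Hahn--Banach explanation are just added detail).
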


One can then naturally ask: are there surjectivity results in lower degrees? The question is already quite interesting when $M$ is locally symmetric of higher rank, as a positive answer leads to Dupont's Conjecture
\cite{Dupont79}. This conjecture states that the comparison maps from the continuous bounded cohomology of the corresponding semisimple Lie group into the standard continuous cohomology is always surjective. Recently, the same method of barycentric straightening has been applied by Lafont and the second author \cite{Lafont-Wang:15} in symmetric spaces, to answer Dupont's Conjecture in high degrees. Showing surjectivity in dimension $p$ requires a uniform bound on the $p$-Jacobian of barycentrically straightened simplices.

The same result holds in our context as well. We restate this here from the introduction,
{
\def\thetheorem{\ref{thm:bcohom}}
\begin{theorem}
Let $M$ be an oriented closed manifold of dimension $n$ admitting a
Riemannian metric of nonpositive curvature with $\op{Ric}_{k+1}<0$ for some $k\leq\floor{\frac{n}{4}}$, then the comparison map $\eta:H^*_b(M,\mathbb R) \rightarrow H^*(M,\mathbb R)$ is surjective when $*\geq 4k$.
\end{theorem}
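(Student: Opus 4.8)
The plan is to establish the degree-$p$ analogue of the Jacobian estimate of Theorem \ref{thm:Jacobian estimate} for every $p\ge 4k$, and then run the standard argument that a uniform bound on the Jacobian of straightened $p$-simplices forces surjectivity of $\eta$ in degree $p$. Note first that $\op{Ric}_{k+1}<0$ is equivalent to $\dim\op{null}(R_v)\le k$ for all $v\in T^1M$ and in particular implies ordinary $\op{Ric}<0$, so Lemma \ref{lem:strictly convex} still applies: the barycentric straightening is well defined and, by the implicit function theorem, sends simplices of every dimension to $C^1$ simplices. Thus the only new analytic input is the degree-$p$ Jacobian bound.

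First I would redo the computation of Section \ref{sec:barystraight} for a singular $p$-simplex with ordered vertices $V$ at a point $\delta\in\Delta^p_s$ where $L:=D_\delta(st_V):T_\delta\Delta^p_s\to T_{st_V(\delta)}\til M$ is injective. Writing $S=\Im L$ and letting $H_\delta,K_\delta$ be the two positive (semi)definite $n\times n$ forms on $T_{st_V(\delta)}\til M$ defined exactly as in Section \ref{sec:barystraight} (so $K_\delta$ is positive definite by Lemma \ref{lem:strictly convex}, and every eigenvalue of $H_\delta$ is at most $1$), the same two applications of Cauchy--Schwarz used to derive \eqref{eqn:Q1-bounds-Q2} now give $|L^{T}K_\delta v|^{2}\le 4\,H_\delta(v,v)$ for all $v$, i.e. $(L^{T}K_\delta)^{T}(L^{T}K_\delta)\preceq 4H_\delta$. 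Combining this with the Gram-determinant identity $|\Jac_\delta(st_V)|^{2}=\det(L^{T}K_\delta^{2}L)/\det(\text{compression of }K_\delta^{2}\text{ to }S)$, eigenvalue monotonicity (the form $(L^{T}K_\delta)^{T}(L^{T}K_\delta)$ has rank $p$), and Cauchy interlacing for the compression, produces
\[
|\Jac_\delta(st_V)|\ \le\ 2^{p}\,\frac{\big(\mu_{n-p+1}\cdots\mu_{n}\big)^{1/2}}{\lambda_{1}\cdots\lambda_{p}},
\]
where $0<\lambda_{1}\le\cdots\le\lambda_{n}$ are the eigenvalues of $K_\delta$ and $0\le\mu_{1}\le\cdots\le\mu_{n}\le 1$ those of $H_\delta$; for $p=n$ this is exactly the inequality underlying Proposition \ref{prop:reduction}.

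Next I would bound this ratio by recycling the estimates of Section \ref{sec:Jacob}. Taking $v$ to be a $\lambda_{1}$-eigenvector of $K_\delta$ and $F_v$ the $(n-k)$-dimensional subspace orthogonal to $v$ on which $R_v\ge C_0$ (it exists since $\dim\op{null}(R_v)\le k$), Lemma \ref{lem:Busemann estimate} gives $\mu_{i}\le (n-k)C'\lambda_{1}^{2/3}$ for $1\le i\le n-k$, while Corollary \ref{cor:n/4 small} (whose proof uses only $\op{Tr}_{k+1}(R_v)\ge C_0$, which is our hypothesis) together with Proposition \ref{prop:average} gives $\lambda_{i}\ge C_0/(k+1)$ for $i\ge k+1$. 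Since $p-k$ of the indices $n-p+1,\dots,n$ lie in $\{1,\dots,n-k\}$ while the other $k$ of the $\mu_i$ are $\le 1$, and since $\lambda_{1}\cdots\lambda_{p}\ge\lambda_{1}^{k}(C_0/(k+1))^{p-k}$, the ratio above is at most a fixed constant times $\lambda_{1}^{(p-4k)/3}$. As $\lambda_{1}\le\|K_\delta\|$ is bounded above by a constant depending only on $\|R\|$ (comparison theorem), and $p\ge 4k$ makes the exponent nonnegative, we obtain $|\Jac_\delta(st_V)|\le C_p$ with $C_p$ depending only on $n,p$ and $(M,g)$; equivalently, the straightened $p$-simplices have uniformly bounded volume. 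I expect this paragraph to be the main obstacle: one must check that the denominator is genuinely the product of the $p$ \emph{smallest} eigenvalues of the full $n\times n$ form $K_\delta$, and then recognize that absorbing the $k$ possibly-degenerating eigenvalues $\lambda_{1},\dots,\lambda_{k}$ against the exponent $\tfrac13$ from Lemma \ref{lem:cal} (available on $n-k$ of the $H_\delta$-directions) is exactly what forces the threshold $p\ge 4k$.

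Finally I would convert the degree-$p$ Jacobian bound into surjectivity of $\eta$ in degree $p$, as in \cite{Lafont-Wang:15}. Represent a class in $H^{p}(M,\R)\cong H^{p}_{dR}(M)$ by a closed $p$-form $\omega$, lift it to a $\Gamma$-invariant closed form $\til\omega$ on $\til M$, and set $c_\omega(\sigma)=\int_{\Delta^p_s}(st_p(\sigma))^{*}\til\omega$. This is $\Gamma$-invariant, hence descends to a singular cochain on $M$; it is a cocycle because $st_{*}$ is a chain map and $d\til\omega=0$ (Stokes); it is bounded since $|c_\omega(\sigma)|\le\|\omega\|_\infty\,C_p\,\vol(\Delta^p_s)$; and as a cochain $c_\omega=st^{*}(\text{de Rham cochain of }\omega)$, so its ordinary cohomology class equals $[\omega]$ because $st_{*}$ is chain homotopic to the identity. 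Therefore $[c_\omega]\in H^{p}_b(M,\R)$ satisfies $\eta[c_\omega]=[\omega]$, and since $[\omega]$ was arbitrary, $\eta$ is surjective in every degree $\ge 4k$. (The case $k=\floor{\frac{n}{4}}$ recovers top-degree surjectivity, i.e. Theorem \ref{thm:main}.)
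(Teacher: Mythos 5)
Your proposal is correct and follows essentially the same route as the paper: extend the Jacobian estimate of Theorem \ref{thm:Jacobian estimate} to $p$-Jacobians of barycentrically straightened simplices for $p\geq 4k$ (your eigenvalue count giving $\lambda_1^{(p-k)/3-k}=\lambda_1^{(p-4k)/3}$ is precisely where the paper's assertion that the estimate ``works verbatim'' and the threshold $*\geq 4k$ come from), and then run the Lafont--Wang integration-of-forms argument to conclude surjectivity of $\eta$ in those degrees. Your packaging of the $p$-Jacobian bound via the Gram-determinant identity, Weyl monotonicity and Cauchy interlacing is a self-contained variant of the basis-choice/upper-triangular computation used in the top-dimensional case, but in substance it coincides with what the paper defers to \cite{Lafont-Wang:15}.
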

\addtocounter{theorem}{-1}
}
\begin{proof}
The proof of Theorem \ref{thm:Jacobian estimate} already works verbatim for the case that, for the same $k$ in the proof, $k\leq\floor{\frac{n}{4}}$. This implies that we obtain the same bounds on the determinant restricted to $4k$-dimensional subspaces. Consequently, the same proof yields a bound for the $p$-Jacobians of the straightening map for $p\geq 4k$.

An essentially identical argument to that of \cite{Lafont-Wang:15} can then be applied in our context where $M$ is geometric rank one to show surjectivity of the comparison maps in the degrees at least $4k$.
\end{proof}

\subsection{The co-Hopf property}

We recall the following.
\begin{defn}
A group $G$ has the {\em co-Hopf property,} in which case $G$ is called {\em co-Hopfian}, if every injective endomorphism $h:G\to G$ is also surjective.
\end{defn}

\begin{cor}
Let $N$ be any closed oriented manifold admitting a nonzero degree map to a closed oriented nonpositively curved manifold $M$ with negative $(\floor{\frac{n}{4}}+1)$-Ricci curvature. Then $\pi_1(N)$ is co-Hopfian.
\end{cor}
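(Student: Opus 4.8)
The plan is to combine the simplicial volume positivity from Theorem~\ref{thm:main} with the degree-monotonicity of simplicial volume to rule out proper injective endomorphisms of $\pi_1(N)$. First I would observe that $M$ is aspherical: it is closed nonpositively curved, so $\til M$ is diffeomorphic to $\R^n$ by Cartan--Hadamard, hence $M=K(\pi_1(M),1)$. Let $h:\pi_1(N)\to\pi_1(N)$ be an injective endomorphism. Since $N$ admits a degree-$d\neq 0$ map $f\colon N\to M$, and $M$ is aspherical, $f$ is (up to homotopy) determined by $f_*=\rho\colon\pi_1(N)\to\pi_1(M)$; the composite $\rho\circ h\colon\pi_1(N)\to\pi_1(M)$ is again realized by a map $f'\colon N\to M$ since $M$ is aspherical. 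The key point is to show $f'$ still has nonzero degree.

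To control the degree of $f'$, I would pass to the finite-index subgroup $\Lambda=h(\pi_1(N))\leq\pi_1(N)$, let $p\colon N'\to N$ be the corresponding finite cover of degree $[\pi_1(N):\Lambda]=:m$ (finite because, by Theorem~\ref{thm:main} applied through the nonzero-degree map $f$, $\norm{N}>0$, and a group with a nonzero-degree map to an aspherical manifold with nontrivial fundamental class cannot have infinite-index... more carefully: a proper injective endomorphism would force $\Lambda$ to have infinite index, and I would derive a contradiction directly from simplicial volume rather than assuming finiteness). Concretely: since $h$ is an isomorphism onto $\Lambda$, the map $f'=f\circ(\text{the map inducing }h)$ factors, up to homotopy, through the cover $N'\to N$ associated to $\Lambda$, i.e. $f'\simeq f\circ p\circ g$ where $g\colon N\to N'$ is a homotopy equivalence (induced by the iso $\pi_1(N)\cong\Lambda=\pi_1(N')$, using asphericity is not available for $N$, so instead I realize $g$ directly on fundamental groups and use asphericity of $M$ to see only $\pi_1$ matters). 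Then $\deg f' = \deg f\cdot\deg p\cdot\deg g = d\cdot m\cdot(\pm 1)$, which is nonzero. Hence $\norm{N}\geq\norm{N'}/m \geq |\deg f'|\,\norm{M}/m \cdot(\text{const}) $; combined with $f\colon N\to M$ giving $|d|\norm M\le \norm N<\infty$ and $\norm M>0$, finiteness of all these quantities forces $[\pi_1(N):\Lambda]=m<\infty$.

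Given $m<\infty$, I finish as follows. An injective endomorphism $h$ with image of finite index $m$ induces, after choosing the covering $N'\to N$ with $\pi_1(N')=\Lambda\cong\pi_1(N)$, a homotopy equivalence $N\to N'$ at the level of relevant homotopy/homology (using that for computing simplicial volume and fundamental classes only $\pi_1$ and the aspherical target matter, via the composed map to $M$). Proportionality/degree-monotonicity of simplicial volume gives $\norm{N'}=m\,\norm{N}$ on one hand (as a degree-$m$ cover) and $\norm{N'}=\norm{N}$ on the other (since the classifying data agree through $M$ and $0<\norm N<\infty$). Therefore $m=1$, i.e. $h$ is surjective, so $\pi_1(N)$ is co-Hopfian.

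\textbf{Main obstacle.} The delicate step is handling the non-asphericity of $N$: simplicial volume and the comparison of fundamental classes must be transported through the aspherical target $M$ and the nonzero-degree map $f$, so I would phrase everything in terms of the maps $N\to M$ and their effect on $H_n(-;\R)$ and $\pi_1$, rather than pretending $N\to N'$ is a genuine homotopy equivalence. Making precise that a proper injective endomorphism would produce a self-map of $M$-related data inflating the finite positive number $\norm N$ by a genuine factor $m>1$ — and that $\norm N$ is both positive (Theorem~\ref{thm:main} via $f$) and finite ($N$ closed) — is where the argument really lives; once $\norm N\in(0,\infty)$ is pinned down, the degree-multiplicativity contradiction is immediate.
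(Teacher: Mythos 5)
There is a genuine gap, and it sits exactly at the place you flag as your ``main obstacle.'' Your degree bookkeeping $\deg f'=\deg f\cdot\deg p\cdot\deg g=d\cdot m\cdot(\pm1)$ requires a map $g\colon N\to N'$ of degree $\pm1$ inducing the isomorphism $\pi_1(N)\cong h(\pi_1(N))=\pi_1(N')$. No such map exists in general: neither $N$ nor $N'$ is aspherical, so an abstract isomorphism of fundamental groups need not be induced by any continuous map at all, and asphericity of $M$ only guarantees that $f_*\circ h$ is induced by \emph{some} map $f'\colon N\to M$, whose degree is computed from $(f_*\circ h)_*$ applied to the image of $[N]$ in $H_n(B\pi_1(N);\R)$ and is in no way forced to equal $dm$. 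A concrete test case: take $N=M\#(S^1\times S^{n-1})$ with $f$ the degree-one collapse map and $h$ the injective endomorphism of $\pi_1(N)\cong\pi_1(M)*\Z$ that is the identity on $\pi_1(M)$ and sends $t\mapsto t^2$; then $f_*\circ h=f_*$, so the realized map is just $f$ again and no extra degree is created, and moreover $h(\pi_1(N))$ has infinite index, so the corresponding cover is not even closed. This last point also shows your finiteness-of-index argument is circular: it invokes $\norm{N'}$ and $\deg f'$, which presuppose $N'$ compact.

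Even granting the factorization and finite index, your closing step fails: $\norm{N'}=m\norm{N}$ is correct for an $m$-fold cover, but $\norm{N'}=\norm{N}$ is unjustified --- for non-aspherical manifolds the simplicial volume is the Gromov norm of the class $c_*[N]\in H_n(B\pi_1(N);\R)$, and the isomorphism $\pi_1(N')\cong\pi_1(N)$ does not identify these classes. What degree monotonicity actually yields is only the inequality $\norm{N}\geq m\abs{d}\,\norm{M}$, which bounds $m$ but is no contradiction for a single $m$. The paper's proof proceeds differently and avoids both problems simultaneously: it realizes the proper injective endomorphism by a covering self-map $g\colon N\to N$ with $\abs{\deg g}\geq 2$ and then \emph{iterates}, so that $f\of g^k\colon N\to M$ has degree at least $2^k$ while the source remains the fixed closed manifold $N$; this contradicts the uniform degree bound of Theorem \ref{thm:vol-ent} (one could equally contradict $\norm{N}\geq\abs{\deg(f\of g^k)}\,\norm{M}$ using Theorem \ref{thm:main}). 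Producing that self-map is exactly where the non-asphericity of $N$ enters the paper's argument as well (the cover corresponding to the image subgroup is identified with $N$, which is automatic when $N$ is aspherical, e.g.\ $N=M$); so your instinct about where the difficulty lies is correct, but your proposed workaround does not close it, and the iteration --- absent from your proposal --- is what converts positivity into a contradiction.
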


\begin{proof}
Suppose not, then there is an injective endomorphism $g_*:\pi_1(N)\to\pi_1(N)$ with proper image. Consider the covering map $g:N\to N$ corresponding to the subgroup $g_*\pi_1(N)<\pi_1(N)$. Since $N$ is closed $\deg(g)$ is finite and it divides the index $[\pi_1(N):g_*\pi_1(N)]$. Hence $\abs{\deg(g)}>1$. Since $N$ admits a map $f:N\to M$ of nonzero degree, for any $k\in \N$ we have a map $f\of g^k:N\to M$ with $\abs{\deg(f\of g^k)}=\abs{\deg(f)\deg(g)^k}\geq 2^k$, which contradicts Theorem \ref{thm:vol-ent} for sufficiently large $k$.
\end{proof}

Note that the above corollary applies to the case of $N=M$ as well using the identity map.

\subsection{The Minvol and Minent invariants}

The {\em minimal volume} $\Minvol(M)$ and {\em lower Ricci minimal volume} $\Minvol_{\Ric}(M)$  are closely related smooth topological invariants of $M$ defined as,
$$\Minvol(M)=\inf \set{\vol(M,g) \, | \, -1\leq K_g\leq 1}$$
and
$$\Minvol_{\Ric}(M)=\inf \set{\vol(M,g) \, | \, -(n-1)\leq \Ric_g},$$
where the infimums are over all $C^\infty$ Riemannian metrics $g$ on $M$ and $K_g$ (resp. $\Ric_g$)  represents the sectional (resp. Ricci) curvatures of $g$.

A similar invariant is the {\em minimal (volume growth) entropy} invariant,
$$\Minent(M)=\inf \set{h(M,g) \, | \, \vol(M,g)=1}$$

Note that since $h(M,g)^n\vol(M,g)$ is a scale invariant quantity,
\begin{align*}
\Minent(M)&=\inf_g \set{h(M,g)\vol(M,g)^{\frac1n}}\\
&=(n-1)\inf \set{\vol(M,g)^{\frac1n} \, | \, h(M,g)\leq n-1}.
\end{align*}
The Bishop volume comparison Theorem (see \cite{Chavel84}) implies that $h(g)\leq n-1$ whenever  $-(n-1)\leq \Ric_g$. The latter condition occurs whenever $-1\leq K_g$, and consequently we have the following relationships:
\[
\Minvol(M)\geq \Minvol_{\Ric}(M) \geq \left(\frac{\Minent(M)}{n-1}\right)^n.
\]

Theorem \ref{thm:vol-ent} then directly implies the following.
\begin{cor}
Let $N$ be any closed oriented manifold admitting a nonzero degree map to a closed oriented nonpositively curved manifold $M$ with negative $(\floor{\frac{n}{4}}+1)$-Ricci curvature. Then $\Minvol(N),\Minvol_{\Ric}(N)$ and $\Minent(N)$ are all positive.
\end{cor}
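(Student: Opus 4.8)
The plan is to derive everything from Theorem~\ref{thm:vol-ent} together with the chain of inequalities
$$\Minvol(N)\geq \Minvol_{\Ric}(N)\geq \left(\frac{\Minent(N)}{n-1}\right)^{\!n}$$
recorded just above the corollary. The crucial point is that the constant $C$ in Theorem~\ref{thm:vol-ent} depends only on the universal cover $\til M$ of the \emph{target} $M$ (and is scale invariant), so it is uniform over all Riemannian metrics carried by the \emph{source} $N$.

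First I would fix a map $f\colon N\to M$ of nonzero degree and an arbitrary Riemannian metric $g$ on $N$. Viewing $(N,g)$ as the source in Theorem~\ref{thm:vol-ent} and using $\abs{\deg f}\geq 1$, we obtain
$$h(N,g)^n\vol(N,g)\ \geq\ C\abs{\deg f}\,h(M)^n\vol(M)\ \geq\ C\,h(M)^n\vol(M)\ =:\ c_0,$$
where $c_0>0$ is independent of $g$ (it depends only on the fixed curvature-pinched metric on $M$ and on $f$).

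Next, recalling from the discussion above that $\Minent(N)=\inf_g\{h(N,g)\vol(N,g)^{1/n}\}=\inf_g\bigl(h(N,g)^n\vol(N,g)\bigr)^{1/n}$, the uniform bound just obtained gives $\Minent(N)\geq c_0^{1/n}>0$. Feeding this into the displayed chain of inequalities — whose validity rests on the Bishop volume comparison theorem, as noted above — yields $\Minvol_{\Ric}(N)>0$ and hence $\Minvol(N)>0$ as well, completing the proof.

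Beyond this bookkeeping there is no real obstacle; the only subtlety worth flagging is that the constant in Theorem~\ref{thm:vol-ent} must be genuinely independent of the metric on $N$, so that passing to the infimum over all metrics on $N$ preserves the lower bound. This is precisely the $\til M$-only, scale-invariant dependence asserted in that theorem.
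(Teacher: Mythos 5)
Your proof is correct and follows essentially the paper's own route: the corollary there is likewise deduced directly from Theorem \ref{thm:vol-ent} (whose constant depends only on $\til{M}$ and is scale invariant, hence uniform over all metrics on $N$) combined with the displayed chain $\Minvol(N)\geq \Minvol_{\Ric}(N)\geq \left(\frac{\Minent(N)}{n-1}\right)^{n}$. The only ingredient you use tacitly is $h(M)>0$, needed to make your constant $c_0$ strictly positive; this is established in the paper within the proof of Theorem \ref{thm:vol-ent} (by applying the intermediate inequality to the identity map), so your argument is complete.
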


\newcommand{\etalchar}[1]{$^{#1}$}
\def\cprime{$'$}
\providecommand{\bysame}{\leavevmode\hbox to3em{\hrulefill}\thinspace}
\providecommand{\MR}{\relax\ifhmode\unskip\space\fi MR }
\providecommand{\MRhref}[2]{%
  \href{http://www.ams.org/mathscinet-getitem?mr=#1}{#2}
}
\providecommand{\href}[2]{#2}

\end{document}